\documentclass[12pt]{article}
\usepackage{e-jc}
\usepackage{amsthm,amsmath,amssymb}

\usepackage[colorlinks=true,citecolor=black,linkcolor=black,urlcolor=blue]{hyperref}


\usepackage{graphicx}

\numberwithin{equation}{section}
\numberwithin{figure}{section}
\theoremstyle{plain}
\newtheorem{thm}{Theorem}[section]
\newtheorem{lem}[thm]{Lemma}
\newtheorem{cor}[thm]{Corollary}

\newtheorem{conj}[thm]{Conjecture}

\theoremstyle{remark}

\newcommand{\M}{\operatorname{M}}

\title{Proof of a refinement of Blum's conjecture on hexagonal dungeons}
\author{Tri Lai\footnote{This research was supported in part by the Institute for Mathematics and its Applications with funds provided by the National Science Foundation.}\\
\small Institute for Mathematics and its Applications\\[-0.8ex]
\small University of Minnesota\\[-0.8ex]
\small Minneapolis, MN 55455\\
\small\tt tmlai@ima.umn.edu
}
\date{\small Mathematics Subject Classifications: 05A15, 05C70}

\begin{document}
\maketitle

\begin{abstract}
Matt Blum conjectured that the number of  tilings of a hexagonal dungeon with side-lengths $a,2a,b,a,2a,b$ (for $b\geq2a$) equals $13^{2a^2}14^{\lfloor a^2/2\rfloor}$. Ciucu and the author of the present paper proved the conjecture by using Kuo's graphical condensation method. In this paper, we investigate  a 3-parameter refinement of the conjecture and its application to enumeration of tilings of several new types of the hexagonal dungeons.

\bigskip\noindent \textbf{Keywords:} perfect matching, tiling,  Aztec dungeon, hexagonal dungeon, graphical condensation.

\end{abstract}

\section{Introduction}
A lattice partitions the plane into fundamental regions. A \textit{(lattice) region} considered in this paper is a finite connected union of fundamental regions. We call the union of any two fundamental regions sharing an edge a \textit{tile}. We would like to know how many different ways to cover a region by tiles so that there are no gaps or overlaps; and such coverings are called \textit{tilings}. We use the notation $\M(R)$ for the number of tilings of a region $R$, and $\mathcal{M}(R)$ for the set of all tilings of $R$.

Consider the lattice obtained from the triangular lattice by drawing in all attitudes of each unit triangle. The resulting lattice is usually called \text{$G_2$-lattice}, since it is the lattice corresponding to the affine Coxeter group $G_2$. On the $G_2$-lattice, Blum investigated a variation of Aztec dungeon (see \cite{Ciucu}) called \textit{hexagonal dungeon}. In particular, we draw a hexagonal contour of side-lengths\footnote{The unit here is the side-length of the unit triangles.} $a,2a,b,a,2a,b$ (in cyclic order, start by the west side) as the light bold contour in Figure \ref{hexagon}. We draw next a jagged boundary running along the hexagonal contour (see the dark bold closed path in Figure \ref{hexagon}), and denote by $HD_{a,2a,b}$ the region restricted by the boundary. Blum found a striking pattern of the numbers of tilings of the hexagonal dungeons, which led him to his well-known conjecture that the hexagonal dungeon $HD_{a,2a,b}$ has $13^{2a^2}14^{\lfloor\frac{a^2}{2}\rfloor}$ tilings, when $b\geq 2a$. Fourteen years latter, Ciucu and the author proved the conjecture in \cite{CL} by using Kuo's graphical condensation method \cite{Kuo}. However, the proof did not explain the (surprising) appearance of the numbers 13 and 14 in the Blum's formula. In this paper, we consider generating function of the tilings of the hexagonal dungeons and give an explanation for the appearance of  the numbers $13$ and $14$.

\begin{figure}\centering
\includegraphics[width=12cm]{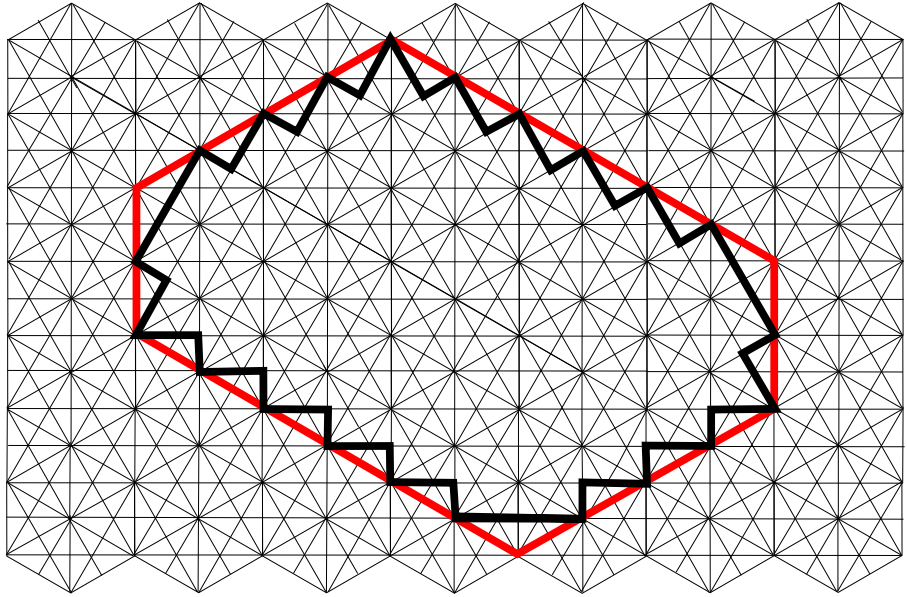}
\caption{The hexagonal dungeon of sides $2,$ $4,$ $6,$ $2,$ $4,$ $6$ (in cyclic order, starting from the western side). This Figure first appeared in
\cite{CL}.}
\label{hexagon}
\end{figure}

 \begin{figure}\centering
\includegraphics[width=10cm]{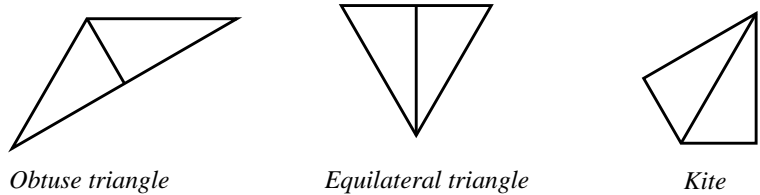}
\caption{Three types of tiles of a hexagonal dungeon.}
\label{dungeontiletype}
\end{figure}

The tiles in a hexagonal dungeon have three possible shapes: an obtuse triangle, an equilateral triangle,  and a kite (see Figure \ref{dungeontiletype}). We consider the following generating functions
\begin{equation}\label{define}
F(x,y,z)=\sum_{T\in\mathcal{M}(HD_{a,2a,b})}x^{m}y^{n}z^{l},
\end{equation}
 where $m$, $n$, $l$ are respectively the numbers of obtuse triangle tiles, equilateral triangle tiles, and kite tiles in the tiling $T$ of $HD_{a,2a,b}$. We call $F(x,y,z)$ the \textit{tiling generating function} of the hexagonal dungeon. Our goal is to prove the following refinement of Blum's conjecture.

\begin{thm}[Weighted Hexagonal Dungeon Theorem]\label{weighthd}
Assume $a$ and $b$ are two positive integers so that $b\geq 2a$.
Then the tiling generating function of the hexagonal dungeon $HD_{a,2a,b}$ is given by
\begin{align}
F(x,y,z)=&\gamma(a)x^{3ab-2a^2+3\lfloor\frac{a^2}{2}\rfloor} y^{6ab-a^2+3a-b-2\lfloor\frac{a^2}{2}\rfloor}z^{9ab+3a^2-7\lfloor\frac{a^2}{2}\rfloor-2b-12a}\notag\\
&\times (x^6+3x^4y^2+3x^2y^4+y^6+2x^3z^3+2xy^2z^3+z^6)^{2a^2}\notag\\
&\times\left((x^4+2x^2y^2+y^4+x^3z+xy^2z-y^2z^2+xz^3+z^4)(x^2+y^2-xz+z^2)\right)^{\lfloor\frac{a^2}{2}\rfloor},
\end{align}
where 
$\gamma(a)=(y/z)^{1-(-1)^a}$ (i.e. $\gamma(a)$ is $1$ if $a$ is even, and $y^2/z^2$ if $a$ is odd).
\end{thm}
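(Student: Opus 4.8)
The plan is to follow the same strategy used by Ciucu and the author in \cite{CL}, namely Kuo's graphical condensation, but now carried out at the level of the three-variable tiling generating function $F(x,y,z)$ rather than the bare count $\M(HD_{a,2a,b})$. The first step is to reformulate $F(x,y,z)$ as a weighted perfect-matching generating function of the planar bipartite graph $G_{a,b}$ dual to $HD_{a,2a,b}$: each edge of $G_{a,b}$ carries a weight $x$, $y$, or $z$ according to the type of tile (obtuse triangle, equilateral triangle, or kite) it represents, and $F$ becomes $\sum_{M}\prod_{e\in M}\operatorname{wt}(e)$ over perfect matchings $M$. Since Kuo condensation is an identity about weighted matchings of planar bipartite graphs, it applies verbatim in this weighted setting; the key is to choose the four distinguished vertices $u,v,w,s$ on the outer face exactly as in \cite{CL} so that the three graphs appearing on the right-hand side of the condensation recursion are again (weighted) hexagonal dungeons, possibly with small "defects" along the boundary.

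The second step is to set up the induction. I expect one needs an auxiliary family of weighted regions — hexagonal dungeons with a few boundary cells removed or with indented corners — whose generating functions factor in a controlled way, so that the condensation recurrence closes. I would introduce notation $F_{a,b}=F(x,y,z)$ for the main region and $F^{(i)}_{a,b}$ for the auxiliary regions, guess their product formulas from small cases (and from the known unweighted specialization $x=y=z=1$, which must recover $13^{2a^2}14^{\lfloor a^2/2\rfloor}$), and then verify that all the guessed formulas satisfy the system of condensation identities. The induction would be on $a$ (the "thickness" parameter), reducing $HD_{a,2a,b}$ to regions with smaller $a$; the parameter $b$ is handled either by a separate, easier induction or by a direct transfer-matrix/product argument along the long sides, using that the structure there is essentially periodic once $b\geq 2a$.

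The main obstacle, as in the unweighted proof, will be the bookkeeping: identifying precisely which auxiliary regions arise, checking that forced edges near the jagged boundary remove exactly the right monomial factors, and confirming that the complicated prefactor $\gamma(a)\,x^{3ab-2a^2+3\lfloor a^2/2\rfloor}y^{\cdots}z^{\cdots}$ together with the two polynomial factors raised to the powers $2a^2$ and $\lfloor a^2/2\rfloor$ is reproduced exactly by the recurrence. In the weighted setting this is strictly harder than in \cite{CL}, because one must track three separate exponents through every step, and the appearance of the floor functions forces a parity split (the factor $\gamma(a)$) that must be shown to propagate correctly under $a\mapsto a-1$ or $a\mapsto a-2$. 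A useful sanity check at each stage is to specialize $(x,y,z)\to(1,1,1)$ and confirm agreement with the identities already established in \cite{CL}, and to specialize to a two-variable slice (say $z=1$) where the kite count is suppressed, which should match independently computable small cases.

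Once the system of weighted recurrences is verified, the theorem follows by induction from the base cases $a=1$ (and a short list of small $b$), which can be checked by hand or by direct enumeration. Finally, I would note that the two polynomial factors in the statement, $x^6+3x^4y^2+3x^2y^4+y^6+2x^3z^3+2xy^2z^3+z^6$ and $(x^4+2x^2y^2+y^4+x^3z+xy^2z-y^2z^2+xz^3+z^4)(x^2+y^2-xz+z^2)$, evaluate to $13$ and $14$ respectively at $x=y=z=1$, which is precisely the promised explanation for the mysterious appearance of $13$ and $14$ in Blum's formula; this observation requires no further proof beyond the theorem itself, but it is worth highlighting as the conceptual payoff.
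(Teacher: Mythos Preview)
Your plan diverges from the paper's proof in a way that is not just cosmetic, and as written it contains a real gap.

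The paper does \emph{not} apply Kuo condensation to the hexagonal dungeon $HD_{a,2a,b}$ itself. Instead it proceeds in two essentially independent steps. First, a homogeneity argument (divide every tile weight by $z$) reduces the statement to $z=1$. Second, the Graph-Splitting Lemma (Lemma~\ref{GS}) cuts the weighted dungeon $HD_{a,2a,b}(x,y,1)$ along two zigzag lines into three pieces: a central strip with a \emph{unique} forced tiling (contributing only a monomial in $x,y$, which absorbs the entire dependence on $b$), flanked by two copies of the region $D_{2a,3a,2a}(x,y)$. So the whole theorem reduces to evaluating $\Phi(2a,3a,2a)$, and this is where Kuo condensation actually enters: it is used to prove Theorem~\ref{newmain} for the three-parameter family $D_{a,b,c}$, $E_{a,b,c}$, exactly as in \cite{CL} but with the weighted recurrences (\ref{WR1})--(\ref{WR5}) in place of the unweighted ones.

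Your proposal, by contrast, is to run condensation directly on $G_{a,b}$, do induction on $a$, and handle $b$ ``by a separate, easier induction or by a direct transfer-matrix/product argument.'' The difficulty is that the hexagonal dungeons form only a two-parameter family with the rigid constraint that the sides are $a,2a,b,a,2a,b$; deleting four boundary cells at the corners does not, in general, return another hexagonal dungeon, and you have not identified what auxiliary family would close the recursion. In \cite{CL} this closure is achieved precisely by passing to the larger three-parameter family $D_{a,b,c}$, $E_{a,b,c}$ --- a step your plan omits. The graph-splitting step is also what makes the $b$-dependence trivial; without it, your ``transfer-matrix along the long sides'' remark is doing a lot of unexplained work. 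In short, the missing idea is the decomposition $HD_{a,2a,b}\rightsquigarrow D_{2a,3a,2a}^{\,2}\times(\text{forced strip})$; once you have that, the weighted condensation you describe is indeed the right tool, but it is applied to the $D$/$E$ regions, not to $HD$ directly.
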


This paper is organized as follows. In Section 2,  we recall the definition of two important regions $D_{a,b,c}$ and $E_{a,b,c}$, which were first introduced in \cite{CL}. In addition, we state a result on  their tiling generating functions (see Theorem \ref{newmain}), which is the key to prove Theorem \ref{weighthd}. Next, we prove Theorem \ref{newmain} in Section 3, and use this to prove Theorem \ref{weighthd} in Section 4. Section 5 uses Theorem \ref{weighthd} to enumerate of tilings of several new types of hexagonal dungeons.  Finally, Section 6 is devoted for an open problem on hexagonal dungeons with defects.

\section{The regions $D_{a,b,c}$ and $E_{a,b,c}$ and their tilings generating functions.}

First,  we recall briefly the definition of the two regions $D_{a,b,c}$ and $E_{a,b,c}$, which were first introduced in \cite{CL}.

\begin{figure}\centering
\includegraphics[width=10cm]{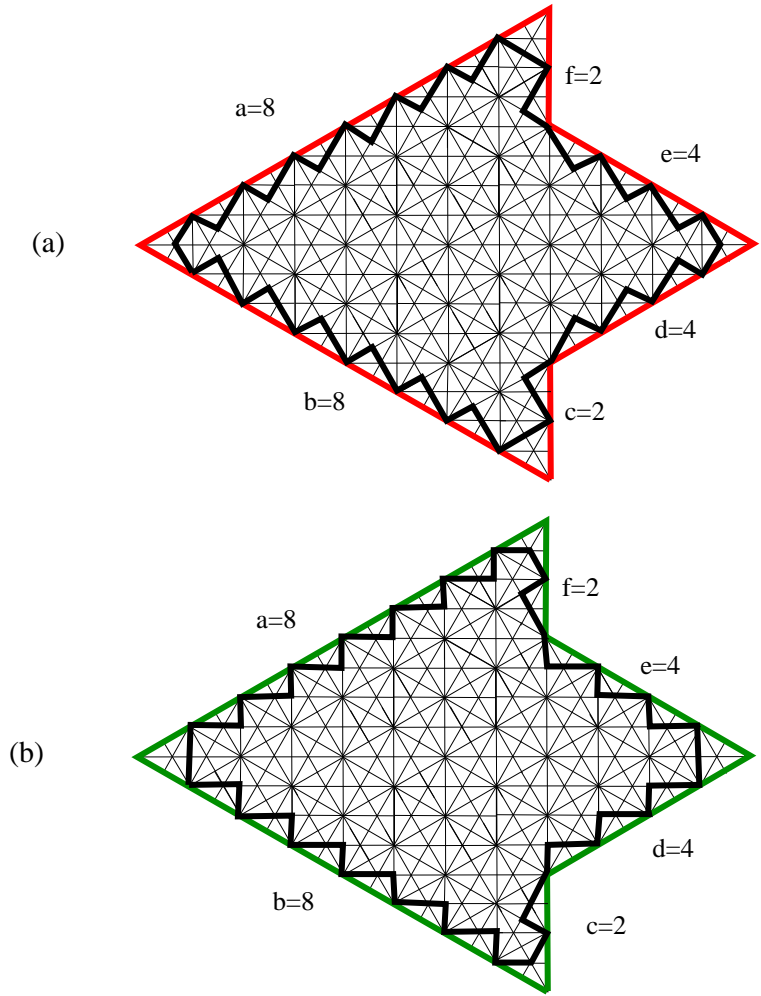}
\caption{Example of the case $a>c+d$: the regions $D_{8,8,2}$ (a) and $E_{8,8,2}$ (b). This Figure first appeared in
\cite{CL}.}
\label{DEcombine1}
\end{figure}

\begin{figure}\centering
 \includegraphics[width=10cm]{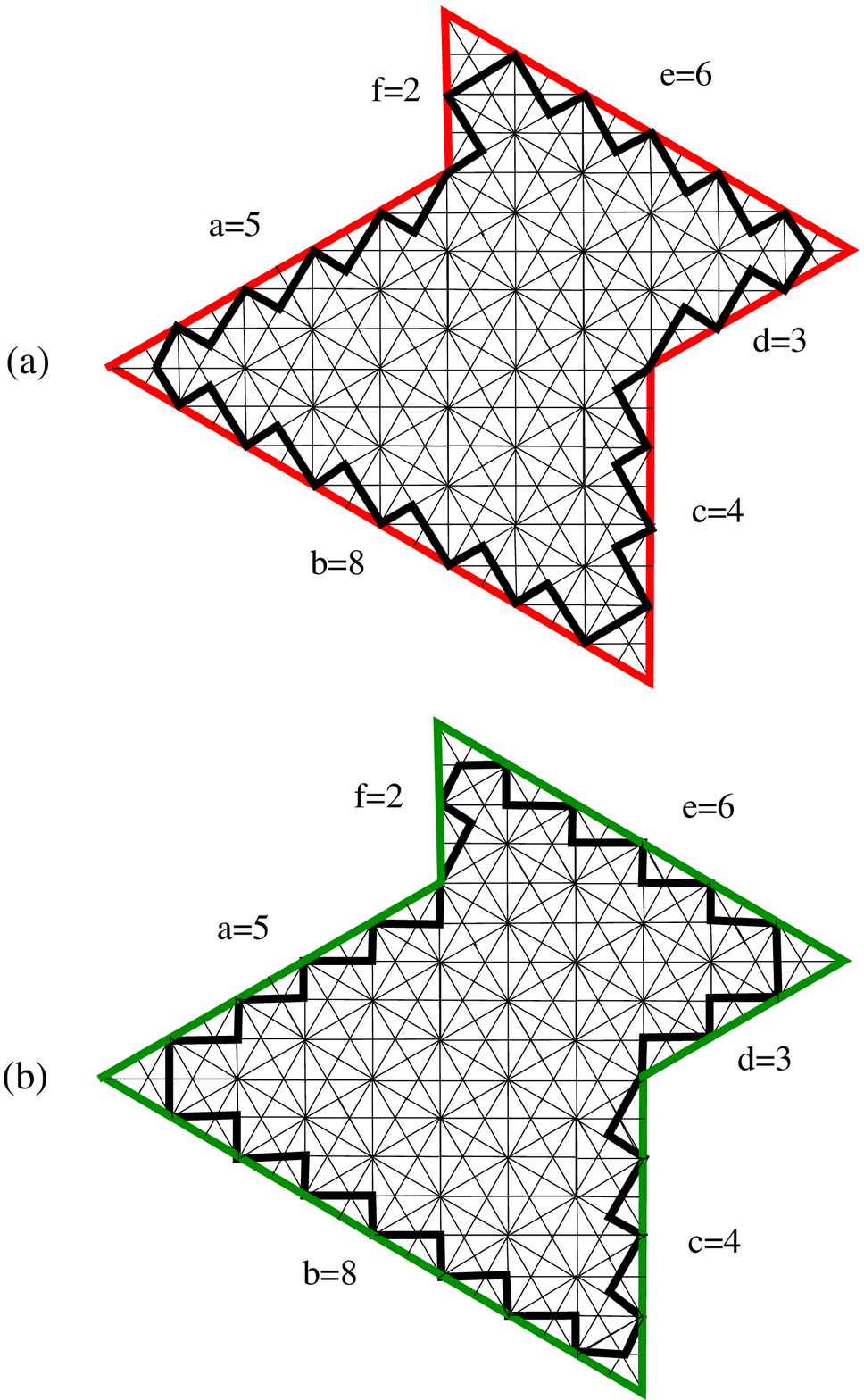}
\caption{Example of the case $a\leq c+d$: the regions $D_{5,8,4}$ (a) and $E_{5,8,4}$ (b). This Figure first appeared in
\cite{CL}.}
\label{DEcombine2}
\end{figure}

Let $a,b,c,d,e,f$ be six non-negative integers.  Starting from a vertex of some unit triangle on $G_2$-lattice, we travel along lattice lines $a$ units southwest, $b$ units southeast, $c$ units north, $d$ units northeast, and $e$ units northwest. We adjust $e$ so that the ending point is on the same vertical line as the starting point. Finally, we close the contour by go $f$ units north or south.

 The contour is illustrated in Figure \ref{DEcombine1}, for the case $a> c+d$; and  in  Figure \ref{DEcombine2},  for the case $a\leq c+d$. If  some side of the contour has length $0$, we assume that the side shrinks to a point. Denote  by $\mathcal{C}(a,b,c)$ the resulting contour.

Based on the contour $\mathcal{C}(a,b,c)$, we define two lattice regions $D_{a,b,c}$ and $E_{a,b,c}$  determined by the dark jagged boundaries as in Figure \ref{DEcombine1}, for the case $a>c+d$, and by Figure \ref{DEcombine2}, for the case $a\leq c+d$.

We already showed  in \cite{CL} that the closure of the contour $\mathcal{C}(a,b,c)$, the choice of $e$, and the existence of tilings in the regions $D_{a,b,c}$ and $E_{a,b,c}$ require  $b\geq 2$, $d=2b-a-2c$, $e=b+d-a=3b-2a-2c,$ and $f=|a-c-d|=|2a+c-2b|$.  Moreover, it has been shown in \cite{CL} that the numbers of tilings of $D_{a,b,c}$ and $E_{a,b,c}$ is given by powers of $13$ and $14$ (see Theorem 3.1).

\medskip

Next, we consider the tiling generating functions of $D_{a,b,c}$ and $E_{a,b,c}$ as follows. Define
\begin{equation}
\Phi(a,b,c):=\Phi(a,b,c)(x,y):=\sum_{T\in \mathcal{M}(D_{a,b,c})}x^{m}y^{n},
\end{equation}
 where $m$ and $n$ are respectively the numbers of obtuse triangle tiles and equilateral triangle tiles in tiling $T$ of $D_{a,b,c}$. Similarly, we set
\begin{equation}
\Psi(a,b,c):=\Psi(a,b,c)(x,y):=\sum_{T'\in \mathcal{M}(E_{a,b,c})}x^{k}y^{l},
\end{equation}
 where $k$ and $l$ are the numbers of obtuse triangle tiles and equilateral triangle tiles in tiling $T'$ of $E_{a,b,c}$.

\medskip

Define three new functions as follows:
\begin{equation}
g(a,b,c):=(b-a)(b-c)+\left\lfloor\frac{(a-c)^2}{3}\right\rfloor,
\end{equation}
\begin{equation}
q(a,b,c):=  \left\lfloor\frac{(a-b+c)^2}{4}\right\rfloor,
\end{equation}
and
\begin{equation}
\tau(a,b,c):=\begin{cases}
4a^2+8b^2+4c^2-10ab-10bc+6ac-2b+c & \text{if $2a+c-2b>0$;}\\
4a^2+8b^2+4c^2-10ab-10bc+6ac+6a-8b+4c & \text{otherwise.}
\end{cases}
\end{equation}

Denote by $P(x,y,z)$ and $Q(x,y,z)$ the two polynomials appearing in Theorem \ref{weighthd}, i.e.
\begin{equation}
P(x,y,z):=x^6+3x^4y^2+3x^2y^4+y^6+2x^3z^3+2xy^2z^3+z^6
\end{equation}
and
\begin{equation}
Q(x,y,z):=(x^4+2x^2y^2+y^4+x^3z+xy^2z-y^2z^2+xz^3+z^4)(x^2+y^2-xz+z^2).
\end{equation}
In addition, we define two simple functions $h(a,b,c)$ and $p(a,b,c)$ as
\begin{equation}
h(a,b,c)=
\begin{cases}
(x^4+2x^2y^2+y^4+x)x/y &\text{ if $3b+a-c\equiv 5\pmod{6};$}\\
(x^3+xy^2+1)x/y   &\text{ if $3b+a-c\equiv 1\pmod{6};$}\\
(x^2+y^2)&\text{ if $3b+a-c\equiv 4\pmod{6};$}\\
y&\text{ if $3b+a-c\equiv 3\pmod{6};$}\\
1 &\text{ otherwise,}
\end{cases}
\end{equation}
 and
 \begin{equation}
p(a,b,c)=
\begin{cases}
(x^4+2x^2y^2+y^4+x)x/y &\text{ if $3b+a-c\equiv 1\pmod{6};$}\\
(x^3+xy^2+1)x/y   &\text{ if $3b+a-c\equiv 5\pmod{6};$}\\
(x^2+y^2)&\text{ if $3b+a-c\equiv 2\pmod{6};$}\\
y&\text{ if $3b+a-c\equiv 3\pmod{6};$}\\
1 &\text{ otherwise.}
\end{cases}
\end{equation}

The generating functions $\Phi(a,b,c)$ and $\Psi(a,b,c)$ are given by the theorem stated below.
\begin{thm}\label{newmain}
Assume that $a$, $b$ and $c$ are three non-negative integers satisfying $b\geq 2$, $d:=2b-a-2c\geq0$ and $e:=3b-2a-2c\geq0$. Then
\begin{equation}\label{maineq1}
\Phi(a,b,c)=h(a,b,c)x^{2g(a,b,c)+3q(a,b,c)}y^{\tau(a,b,c)}P(x,y,1)^{g(a,b,c)}Q(x,y,1)^{q(a,b,c)}
\end{equation}
and
\begin{equation}\label{maineq2}
\Psi(a,b,c) =p(a,b,c)x^{2g(a,b,c)+3q(a,b,c)}y^{\tau(a,b,c)}P(x,y,1)^{g(a,b,c)}Q(x,y,1)^{q(a,b,c)}.
\end{equation}
\end{thm}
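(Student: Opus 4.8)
The plan is to prove the two identities \eqref{maineq1} and \eqref{maineq2} simultaneously by induction, using Kuo's graphical condensation — exactly the machinery that was used in \cite{CL} to establish the unweighted count, but now carried out at the level of the full tiling generating function rather than a plain cardinality. First I would transfer the problem to the language of perfect matchings: the regions $D_{a,b,c}$ and $E_{a,b,c}$ correspond to planar bipartite graphs, each tile corresponds to a matching edge, and the statistics $m$ (obtuse triangles) and $n$ (equilateral triangles) become edge-weights, so that $\Phi$ and $\Psi$ are weighted matching generating functions $\sum_M \prod_{e\in M} \mathrm{wt}(e)$. Kuo condensation then gives a recurrence of the form $\M(G)\,\M(G'') = \M(G_1)\,\M(G_2) \pm \M(G_3)\,\M(G_4)$ relating the weighted matching sums of a family of subgraphs obtained by deleting prescribed boundary vertices; the delicate points are choosing the four distinguished vertices so that each of the six resulting graphs is again (up to a forced block of edges) one of the regions $D_{a',b',c'}$ or $E_{a',b',c'}$ with strictly smaller parameters, and keeping track of the weight of the forced edges that get stripped off.

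Concretely, I expect the induction to be a double induction mirroring \cite{CL}: an outer layer that peels off a layer of the hexagonal strip, reducing, say, $b$, and an inner layer handling the interaction between $D$ and $E$. One would identify several Kuo-type recurrences — typically a ``balanced'' condensation and one or two ``unbalanced'' or ``non-alternating'' variants — each expressing $\Phi(a,b,c)$ or $\Psi(a,b,c)$ in terms of $\Phi,\Psi$ at smaller parameters, with explicit monomial prefactors coming from forced tiles. Plugging the conjectured closed forms \eqref{maineq1}–\eqref{maineq2} into each such recurrence reduces everything to a polynomial identity among the monomials $x^{2g+3q}y^{\tau}$, the $\mathbb{Z}$-linear (resp.\ floor-function) behaviour of $g,q,\tau$ under the parameter shifts, and the prefactors $h,p$ whose case-split is governed by $3b+a-c \bmod 6$. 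The base cases — small $a,b,c$, or the degenerate regimes where one of $c,d,e,f$ vanishes — have to be checked directly; in several of these the region degenerates to a strip or a trapezoid whose tiling generating function is a known product, so this step, while tedious, is mechanical.

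The main obstacle, as in the original proof, will be the bookkeeping of the parameter arithmetic together with the $P(x,y,1)$ and $Q(x,y,1)$ exponents. Specifically: (i) verifying that under each Kuo reduction the multiset $\{g,q\}$ of exponents on the two ``product'' sides matches the multiset on the ``single'' side plus the ``error'' side, which hinges on the quadratic-plus-floor nature of $g(a,b,c)=(b-a)(b-c)+\lfloor (a-c)^2/3\rfloor$ and $q(a,b,c)=\lfloor(a-b+c)^2/4\rfloor$ and requires splitting into residue classes of $a-c$ mod $3$ and $a-b+c$ mod $2$; (ii) tracking how the prefactor functions $h$ and $p$ transform — note that $h$ and $p$ are ``swapped'' relative to each other, which is precisely what lets the $D\leftrightarrow E$ condensation close up — and checking that in the residue classes $3b+a-c\equiv 1,5\pmod 6$ the extra rational factors $(x^4+2x^2y^2+y^4+x)x/y$ and $(x^3+xy^2+1)x/y$ reproduce themselves correctly after the shift; (iii) dealing with the weight of the forced edges, since an obtuse triangle, an equilateral triangle, and a kite carry different weights, so a forced block of tiles contributes a nontrivial monomial in $x$ and $y$ that must be reconciled with the $x^{2g+3q}y^{\tau}$ normalization. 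Once these three consistency checks are organized into a finite list of polynomial identities (one per recurrence per residue class), each is a routine, if lengthy, expansion; I would relegate those to a computation and present only the recurrences, the parameter bookkeeping, and one or two representative verifications in detail.
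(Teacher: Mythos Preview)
Your proposal is correct and follows essentially the same route as the paper: weight the tiles, apply Kuo condensation at the same boundary vertices used in \cite{CL} to obtain weighted recurrences (the paper's (R1)--(R5)) with monomial prefactors from forced tiles, verify that the closed forms \eqref{maineq1}--\eqref{maineq2} satisfy the same recurrences by splitting on residue classes, and induct (the paper organizes it as a single induction on the perimeter of $\mathcal{C}(a,b,c)$ rather than a double induction, but this is cosmetic). One point to flag: the paper does not claim the base cases are hand-checkable but instead verifies them with \texttt{vaxmacs} and \texttt{Maple}, so your remark that they are ``mechanical'' via known strip/trapezoid products is a bit optimistic.
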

Theorem \ref{newmain} is the key in proving our main Theorem \ref{weighthd} in Section 4.

\medskip

We conclude this section by noticing that Ciucu proved implicitly the formulas of $\Phi(n,n,0)$ and $\Psi(n,n,0)$ (under the form of tiling generating functions of the Aztec dungeons in Theorem 3.1 and Proposition 3.6 of \cite{Ciucu}). In other word, our Theorem \ref{newmain} generalizes Ciucu's Theorem 3.1 and Proposition 3.6 in \cite{Ciucu}.

\section{Proof of Theorem \ref{newmain}}
Before proving Theorem \ref{newmain}, we need several definitions and terminology as follows.

A \textit{perfect matching} of a graph $G$ is a collection of disjoint edges covering all vertices of $G$. The \textit{dual graph} $G$ of $R$ is the graph whose vertices are fundamental regions in $R$ and whose edges connect precisely two fundamental regions sharing an edge.  The tilings of a region can be identified with the perfect matchings of it dual graph.   In the view of this, we use the notation $\M(G)$ for the number of perfect matchings of $G$.

In the weighted case, $\M(G)$ defines the sum of weights of perfect matchings of $G$, where the \emph{weight} of a perfect matching is the product of weights of all constituent edges. Define similarly the weighted sum $\M(R)$ of tilings of a weighted region $R$. Each edge of the dual graph $G$ of $R$ has the same weight as that of its corresponding tile in $R$.

Consider the following recurrences (\ref{WR1})--(\ref{WR5}), where the notations $\bigstar(a,b,c)$ and $\lozenge(a,b,c)$ have been used to indicate some polynomials in $\mathbb{Z}[x,y]$.

\begin{equation}\tag{R1}
\begin{split}
\bigstar(a,b,c)\bigstar(a-3,b-3,c-2)=xy^2\bigstar(a-2,b-1,c)\bigstar(a-1,b-2,c-2)\\+x^2y^4\bigstar(a-1,b-1,c-1)\bigstar(a-2,b-2,c-1).
\end{split} \label{WR1}
\end{equation}

\medskip

\begin{equation}\tag{R2}
\bigstar(a,b,c)\bigstar(a-2,b-2,c)=x^2y^4\bigstar(a-1,b-1,c)^2\\+\bigstar(a,b,c+1)\bigstar(a-2,b-2,c-1).
\label{WR2}
\end{equation}

\medskip

\begin{equation}\tag{R3}
      \bigstar(a,b,0)\bigstar(a-2,b-2,0)=x^2y^4\bigstar(a-1,b-1,0)^2\\+\bigstar(a,b,1)\bigstar(3b-2a,2b-a,1).
      \label{WR3}
\end{equation}

\medskip

\begin{equation}\tag{R4}
  \begin{split}
         \bigstar(a,b,c)\bigstar(a-2,b-3,c-2)=xy^2\bigstar(a-1,b-1,c)\bigstar(a-1,b-2,c-2)\\+xy^2\bigstar(a-2,b-2,c-1)\bigstar(a,b-1,c-1).
  \end{split}\label{WR4}
\end{equation}

\medskip

\begin{equation}\tag{R5}
\begin{cases}
    \begin{split}
       \bigstar(a,b,c)\bigstar(a-2,b-3,c-2)=xy^2\lozenge(c,b-1,a-1)\bigstar(a-1,b-2,c-2)\\+xy^2\bigstar(a-2,b-2,c-1)\bigstar(a,b-1,c-1);
    \end{split}\\

    \begin{split}
      \lozenge(a,b,c)\lozenge(a-2,b-3,c-2)= xy^2\bigstar(c,b-1,a-1)\lozenge(a-1,b-2,c-2)\\+xy^2\lozenge(a-2,b-2,c-1)\lozenge(a,b-1,c-1).
    \end{split}
    \end{cases}
    \label{WR5}
\end{equation}

We notice that the recurrence (\ref{WR1}) implies the recurrence in Lemmas 4.1 and 5.1 in \cite{CL} by specializing $x=y=1$. Similarly, (\ref{WR2}) and (\ref{WR3}) are weighted versions of the recurrence in Lemmas 4.2(a) and 5.2(a) and  the recurrence in Lemmas 4.2(b) and 5.2(b) in \cite{CL}, respectively. Finally, the $x=y=1$ specializations of (\ref{WR4}) and (\ref{WR5}) give the recurrences in Lemmas 4.3 and 5.3 of \cite{CL}.
\bigskip

Next, we show that $\Phi(a,b,c)$ and $\Psi(a,b,c)$ satisfy the above recurrences (\ref{WR1})--(\ref{WR5}) (with certain constraints) by using the following Kuo's Condensation Theorem.

\begin{thm}[Kuo's Condensation Theorem \cite{Kuo}]\label{kuothm}
Assume that $G$ is a planar bipartite graph, and that $V_1$ and $V_2$ are its vertex classes with $|V_1|=|V_2|$. Let $u,v,w,t$ be four vertices appearing in a cyclic order on a face of $G$. Assume in addition that $u,w\in V_1$ and $v,t\in V_2$. Then
\begin{equation}\label{kuoeq}
\M(G)\M(G-\{u,v,w,t\})=\M(G-\{u,v\})\M(G-\{w,t\})+\M(G-\{u,t\})\M(G-\{w,v\}).
\end{equation}
\end{thm}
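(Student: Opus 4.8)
The plan is to give a bijective proof via superposition of perfect matchings, so that the role of each hypothesis becomes transparent. Write $V_1,V_2$ for the two color classes, with $u,w\in V_1$ and $v,t\in V_2$, and abbreviate the three products in \eqref{kuoeq} by introducing the families $\mathcal{L}=\mathcal{M}(G)\times\mathcal{M}(G-\{u,v,w,t\})$, $\mathcal{A}=\mathcal{M}(G-\{u,v\})\times\mathcal{M}(G-\{w,t\})$ and $\mathcal{B}=\mathcal{M}(G-\{u,t\})\times\mathcal{M}(G-\{w,v\})$. For a pair $(P,Q)$ in any of these I would form the superposition $P\cup Q$, regarded as a multiset of edges. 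In each of the three cases exactly the four vertices $u,v,w,t$ are covered once while every other vertex is covered twice, so $P\cup Q$ is a disjoint union of doubled edges, alternating even cycles, and exactly two vertex-disjoint alternating paths whose four endpoints are $u,v,w,t$. These two paths induce one of three pairings of $\{u,v,w,t\}$: (i) $\{u,v\},\{w,t\}$; (ii) $\{u,t\},\{w,v\}$; or (iii) $\{u,w\},\{v,t\}$.

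The first key step is a parity count. Since $G$ is bipartite, an alternating path joins two vertices of the same class iff it has even length, and at each special endpoint the unique incident superposition-edge lies in a determined matching of the pair (the one that still covers that vertex). Comparing the two end-edges of each path then forces: $\mathcal{L}$ can only realize pairings (i) and (ii); $\mathcal{A}$ can only realize (i) and (iii); and $\mathcal{B}$ can only realize (ii) and (iii). The hard part, and the only place planarity enters, is to eliminate pairing (iii) from $\mathcal{A}$ and $\mathcal{B}$. Here I would use the hypothesis that $u,v,w,t$ lie on a single face in this cyclic order: passing to the sphere, that face is an open disk whose boundary meets the four vertices in the order $u,v,w,t$, so that $\{u,w\}$ and $\{v,t\}$ interleave. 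Two vertex-disjoint arcs in the complementary closed disk with interleaving endpoints on its boundary must cross, contradicting the fact that disjoint paths of an embedded graph cannot cross. Hence pairing (iii) is impossible, and $\mathcal{A}$ realizes only (i) while $\mathcal{B}$ realizes only (ii).

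It then remains to produce weight-preserving bijections $\mathcal{L}_{\mathrm{(i)}}\leftrightarrow\mathcal{A}$ and $\mathcal{L}_{\mathrm{(ii)}}\leftrightarrow\mathcal{B}$, where $\mathcal{L}_{\mathrm{(i)}},\mathcal{L}_{\mathrm{(ii)}}$ are the pairs of $\mathcal{L}$ of each type. Given $(M_1,M_2)\in\mathcal{L}_{\mathrm{(i)}}$ with paths $\Pi_1$ (from $u$ to $v$) and $\Pi_2$ (from $w$ to $t$), I would leave the cycles, doubled edges, and $\Pi_2$ assigned exactly as in $(M_1,M_2)$, but swap the two matchings along $\Pi_1$. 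Because $M_1$ covers all of $\Pi_1$ while $M_2$ covers only its interior, the swap produces a pair $(N_1,N_2)$ with $N_1$ missing exactly $u,v$ and $N_2$ missing exactly $w,t$, i.e. an element of $\mathcal{A}$; the construction is manifestly invertible and uses the same edge multiset, hence preserves the monomial weight. The identical recipe applied to the $u$-endpoint path of a type-(ii) configuration gives $\mathcal{L}_{\mathrm{(ii)}}\leftrightarrow\mathcal{B}$. Summing weights then yields $\M(G)\,\M(G-\{u,v,w,t\})=\mathrm{wt}(\mathcal{L}_{\mathrm{(i)}})+\mathrm{wt}(\mathcal{L}_{\mathrm{(ii)}})=\mathrm{wt}(\mathcal{A})+\mathrm{wt}(\mathcal{B})$, which is exactly \eqref{kuoeq}. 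As an alternative route, one could instead fix a Kasteleyn signing of the planar graph, express each factor as a signed minor of the weighted biadjacency matrix, and invoke the Desnanot--Jacobi determinant identity; there the same cyclic-order hypothesis is what forces the Kasteleyn signs to turn the minus sign of Desnanot--Jacobi into the plus sign of \eqref{kuoeq}, so the essential difficulty is identical in either approach.
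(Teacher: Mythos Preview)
Your argument is correct and is precisely Kuo's original superposition proof: form the red--blue overlay, identify the two alternating paths with endpoints $\{u,v,w,t\}$, use bipartiteness to restrict the possible pairings in each family, use planarity and the cyclic order on a common face to rule out the interleaving pairing $\{u,w\},\{v,t\}$ in $\mathcal{A}$ and $\mathcal{B}$, and then swap colors along the path through $u$ to get the weight-preserving bijections.

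There is nothing to compare against here, because the paper does not prove this statement at all: Theorem~\ref{kuothm} is quoted from \cite{Kuo} and used as a black box in the proofs of Lemmas~\ref{con1}--\ref{con4}. So you have supplied strictly more than the paper does, and what you supplied is exactly the standard proof from the cited source. Your closing remark about the Kasteleyn/Desnanot--Jacobi route is also accurate as an alternative, though it is not needed for the present paper.
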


\begin{lem}\label{con1} Let $a$, $b$ and $c$ be non-negative integers so that  $b\geq5$,  $c\geq 2$, $d:=2b-a-2c\geq0$, and $3b-2a-2c\geq0$. Assume in addition that $a\geq c+d+1$. Then $\Phi(a,b,c)$ and $\Psi(a,b,c)$ both satisfy the recurrence (\ref{WR1}), i.e. we have
\begin{equation} \label{eq1-1}
\begin{split}
\Phi(a,b,c)\Phi(a-3,b-3,c-2)=xy^2\Phi(a-2,b-1,c)\Phi(a-1,b-2,c-2)\\+x^2y^4\Phi(a-1,b-1,c-1)\Phi(a-2,b-2,c-1)
\end{split}\end{equation}
 and
\begin{equation}\label{eq2-1}
 \begin{split}
\Psi(a,b,c)\Psi(a-3,b-3,c-2)=xy^2\Psi(a-2,b-1,c)\Psi(a-1,b-2,c-2)\\+x^2y^4\Psi(a-1,b-1,c-1)\Psi(a-2,b-2,c-1).
\end{split}
\end{equation}
\end{lem}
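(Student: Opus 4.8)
The plan is to derive the weighted recurrences \eqref{eq1-1} and \eqref{eq2-1} by applying Kuo's Condensation Theorem (Theorem \ref{kuothm}) to the dual graph of the region $D_{a,b,c}$ (respectively $E_{a,b,c}$), in exactly the same geometric setup that was used in \cite{CL} to obtain the unweighted recurrences of Lemmas 4.1 and 5.1. First I would recall that the dual graph $G$ of $D_{a,b,c}$ is planar and bipartite, and I would reproduce from \cite{CL} the specific choice of the four vertices $u,v,w,t$ lying in cyclic order on a face of $G$ (these sit near a corner of the jagged boundary, chosen so that the six graphs $G$, $G-\{u,v,w,t\}$, $G-\{u,v\}$, $G-\{w,t\}$, $G-\{u,t\}$, $G-\{w,v\}$ are, after removing forced edges, dual graphs of the smaller regions $D_{a,b,c}$, $D_{a-3,b-3,c-2}$, $D_{a-2,b-1,c}$, $D_{a-1,b-2,c-2}$, $D_{a-1,b-1,c-1}$, $D_{a-2,b-2,c-1}$). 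The hypotheses $b\ge 5$, $c\ge 2$, $d\ge 0$, $3b-2a-2c\ge 0$, and $a\ge c+d+1$ are precisely what guarantee all these smaller parameter triples are legal (they keep $d$, $e$, and $f$ nonnegative and keep us in the $a>c+d$ regime of Figure \ref{DEcombine1}), so the identification of the six subgraphs goes through verbatim.

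The new ingredient over \cite{CL} is bookkeeping the weights. The edges of $G$ carry weights $x$, $y$, or $1$ according to whether the corresponding tile is an obtuse triangle, an equilateral triangle, or a kite, so $\M(G)=\Phi(a,b,c)$ and similarly for each subgraph. The key step is to track the weight of the edges that become \emph{forced} when vertices are deleted: when we pass from $G-\{u,v,w,t\}$ to the dual graph of $D_{a-3,b-3,c-2}$, and likewise for the other five terms, a fixed set of tiles is forced into or out of every matching, contributing a fixed monomial prefactor. I would compute these prefactors by inspecting the local picture near the chosen corner (the same picture as in \cite{CL}, now with weights attached). Matching the prefactors on the two sides of \eqref{kuoeq} is what produces the monomial coefficients $xy^2$ and $x^2y^4$ in \eqref{eq1-1}; since the combinatorial structure is identical to the $x=y=1$ case, the only thing to verify is that the forced edges indeed carry these weights, which is a direct count of forced obtuse and equilateral triangles (one obtuse plus two equilateral for the $xy^2$ term, two obtuse plus four equilateral for the $x^2y^4$ term). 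The argument for $E_{a,b,c}$ and $\Psi$ is word-for-word the same with Figures for $E$ in place of those for $D$.

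The main obstacle is purely expository: one must reproduce enough of the figure-heavy apparatus of \cite{CL} to make the choice of $u,v,w,t$ and the six forced-edge computations unambiguous, and one must be careful that the boundary conditions on $(a,b,c)$ transfer correctly to all six shifted triples — in particular that $a\ge c+d+1$ is exactly the inequality needed so that the corner where we place $u,v,w,t$ has the right local shape in every one of the six regions (not just in $D_{a,b,c}$ itself). I would handle this by stating a lemma-style claim that under the given hypotheses each of $(a-3,b-3,c-2)$, $(a-2,b-1,c)$, $(a-1,b-2,c-2)$, $(a-1,b-1,c-1)$, $(a-2,b-2,c-1)$ again satisfies $b'\ge 2$, $d'\ge 0$, $e'\ge 0$ and $a'>c'+d'$ (so that Figure \ref{DEcombine1} applies), check these inequalities arithmetically, and then invoke Theorem \ref{kuothm} once. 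No genuinely hard estimate or new idea is required; the content is the faithful weighted translation of the condensation identity already established in \cite{CL}.
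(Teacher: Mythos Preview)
Your proposal is correct and follows essentially the same route as the paper: apply Kuo condensation to the weighted dual graph of $D_{a,b,c}$ (resp.\ $E_{a,b,c}$) with the same four vertices $u,v,w,t$ as in \cite{CL}, read off the monomial prefactors contributed by the forced tiles in each of the five vertex-deleted subgraphs, and substitute into \eqref{kuoeq}.

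One caveat worth flagging: your proposed auxiliary claim that each of the five shifted triples $(a',b',c')$ again satisfies $a'>c'+d'$ is \emph{false} in general --- for example, $(a-3,b-3,c-2)$ has $d'=d+1$ and $c'=c-2$, so $a'-(c'+d')=(a-c-d)-2$, which is $\le 0$ whenever $a\in\{c+d+1,c+d+2\}$ --- but it is also \emph{unnecessary}. The regions $D_{a',b',c'}$ are defined in both regimes (Figures~\ref{DEcombine1} and~\ref{DEcombine2}); only the original region needs $a>c+d$ so that the corners where $u,v,w,t$ sit have the right local shape. The paper does not make your auxiliary claim: it just records the five prefactors explicitly as $x^{a+f-3}y^{2a+2f-7}$, $x^{c+f-2}y^{2c+2f-4}$, $x^{b+c-3}y^{2b+2c-7}$, $x^{a+b-3}y^{2a+2b-8}$, $x^{a+b+c+f-7}y^{2a+2b+2c+2f-16}$ (with $f=2a+c-2b$), and the monomials $xy^{2}$ and $x^{2}y^{4}$ emerge after dividing through by the prefactor on the left-hand side. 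So your ``one obtuse plus two equilateral'' and ``two obtuse plus four equilateral'' descriptions are the \emph{net} of this cancellation, not a literal count of forced tiles in any single subgraph.
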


\begin{figure}\centering
\includegraphics[width=14cm]{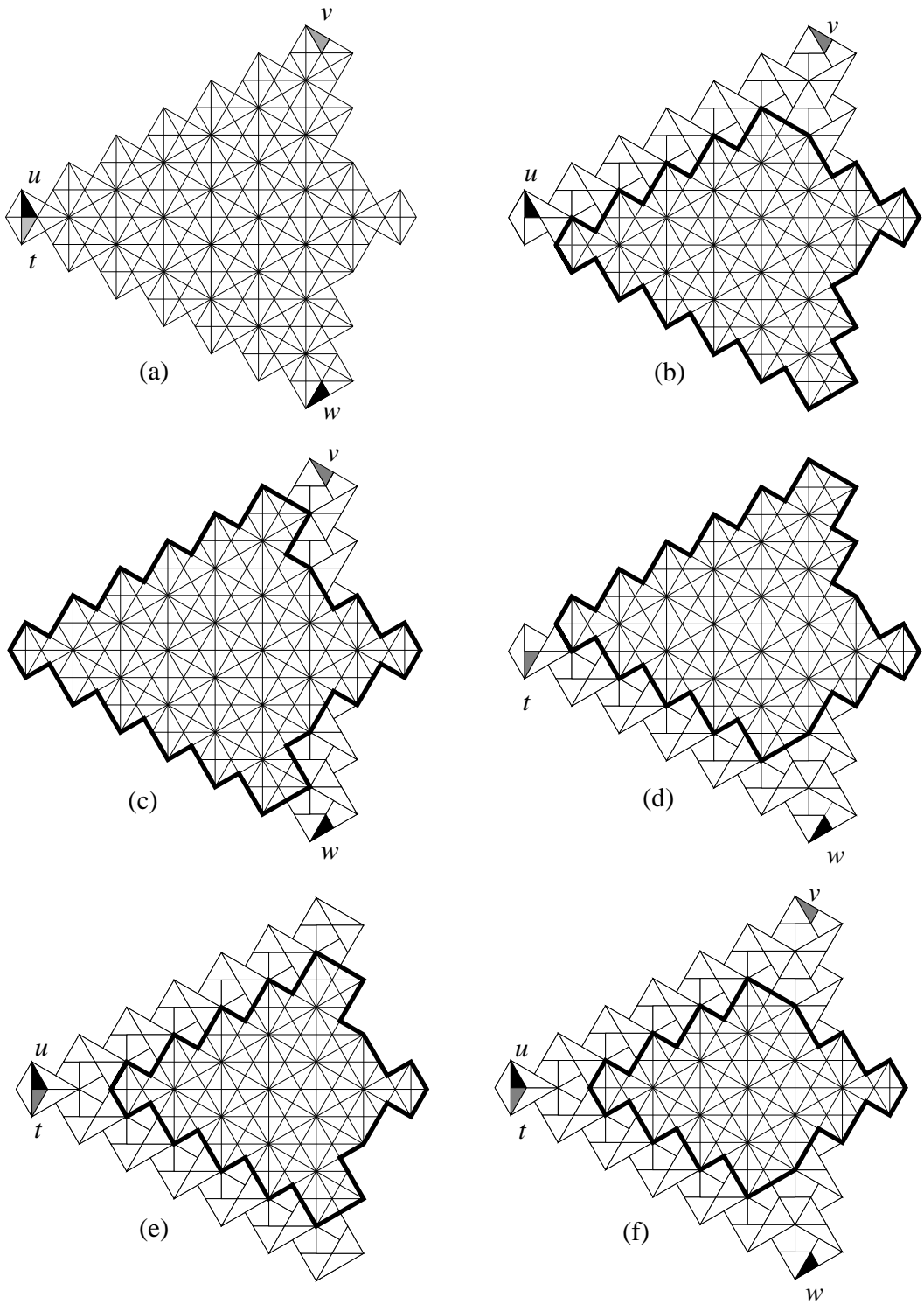}
\caption{Illustrating the proof of equality (\ref{eq1-1}) for the case of region $D_{8,8,3}(x,y)$. }
\label{dualcombine}
\end{figure}


\begin{figure}\centering
\includegraphics[width=6cm]{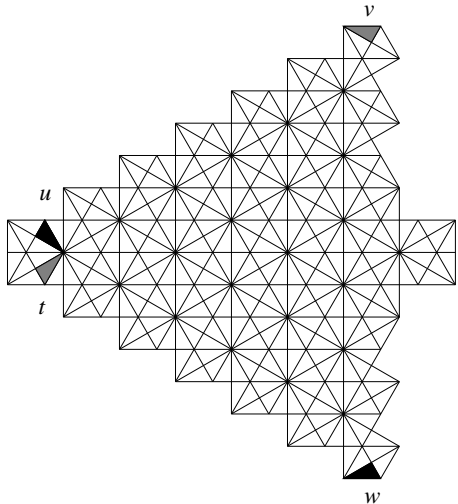}
\caption{How we apply Kuo condensation to the dual graph of $E_{8,8,3}(x,y)$. The black triangles correspond to the vertices $u$ and $w$; the shaded triangles correspond to the vertices $v$ and $t$.}
\label{conden2}
\end{figure}

\begin{proof} We assume that each obtuse triangle tile of $D_{a,b,c}$ and $E_{a,b,c}$ is weighted by $x$, each equilateral triangle tile is weighted by $y$, and kite tiles have weight 1.  To specify the weight assignment, we use the notations $D_{a,b,c}(x,y)$ and $E_{a,b,c}(x,y)$ for the corresponding weighted versions of $D_{a,b,c}$ and $E_{a,b,c}$. Thus, the functions $\Phi(a,b,c)$ and $\Psi(a,b,c)$ are exactly $\M(D_{a,b,c}(x,y))$ and $\M(E_{a,b,c}(x,y))$, respectively.

Apply Kuo's Condensation Theorem \ref{kuothm} to the dual graph $G$ of the weighted region $D_{a,b,c}(x,y)$ with the four vertices $u,v,w,t$ chosen as in Figure \ref{dualcombine}(a). More precisely, the black triangles on the west and south corners of region $D_{a,b,c}(x,y)$ correspond respectively to the vertices $u$ and $w$; the shaded triangles on the west and north corners correspond to the vertices $t$ and $v$.

 Consider the region corresponding to the graph $G-\{u,v\}$. The region has some tiles, which are forced to be in any tilings. By removing these edges, we get  region $D_{a-2,b-1,c}(x,y)$ (see the region restricted by the bold contour in Figure \ref{dualcombine}(b)) and obtain \[\M(G-\{u,v\})=W\cdot\M(D_{a-2,b-1,c}(x,y))=W\cdot\Phi(a-2,b-1,c),\] where $W$ is the product of weights of all forced tiles. Thus, by collecting the weights of the forced tiles, we get
\begin{equation}\label{con1a1}
\M(G-\{u,v\})=x^{a+f-3}y^{2a+2f-7}\Phi(a-2,b-1,c),
\end{equation}
where $f:=|2a+c-2b|$ as usual.

Similarly, we get four more identities, which are respectively illustrated in Figures \ref{dualcombine}(c)--(f):
\begin{equation}\label{con1a2}
\M(G-\{v,w\})=x^{c+f-2}y^{2c+2f-4}\Phi(a-1,b-1,c-1),
\end{equation}
\begin{equation}\label{con1a3}
\M(G-\{w,t\})=x^{b+c-3}y^{2b+2c-7}\Phi(a-1,b-2,c-2),
\end{equation}
 \begin{equation}\label{con1a4}
 \M(G-\{t,u\})=x^{a+b-3}y^{2a+2b-8}\Phi(a-2,b-2,c-1),
 \end{equation}
 and
\begin{equation}\label{con1a5}
\M(G-\{u,v,w,t\})=x^{a+b+c+f-7}y^{2a+2b+2c+2f-16}\Phi(a-3,b-3,c-2).
\end{equation}
Substituting the above five equalities (\ref{con1a1})--(\ref{con1a5}) into the identity  (\ref{kuoeq}) in Theorem \ref{kuothm}, we obtain (\ref{eq1-1}).

Similarly, we get (\ref{eq2-1}) by applying Theorem \ref{kuothm} to the dual graph of the weighted region $E_{a,b,c}(x,y)$ with the four vertices $u,v,w,t$ chosen as in Figure \ref{conden2}.
\end{proof}
One readily sees that  Lemma \ref{con1} implies Lemma 4.1 in \cite{CL} by specializing $x=y=1$.

\begin{figure}\centering
\includegraphics[width=14cm]{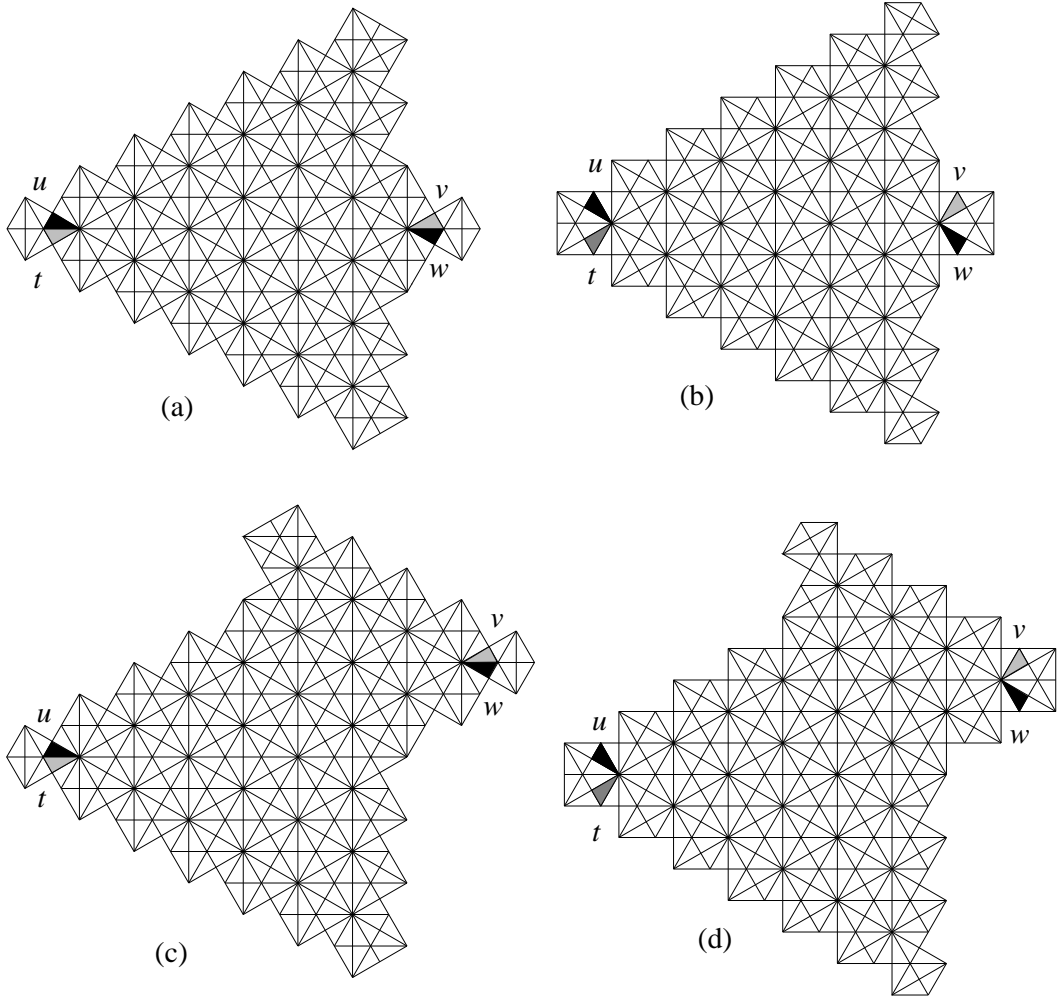}
\caption{How to apply Kuo condensation in Lemma \ref{con3} to the regions (a) $D_{8,8,3}(x,y)$, (b) $E_{8,8,3}(x,y)$, (c) $D_{5,8,4}(x,y)$ and (d) $E_{5,8,4}(x,y)$. }
\label{newcondense}
\end{figure}

\medskip

Applying also the Kuo's Theorem \ref{kuothm} to the dual graphs of $D_{a,b,c}(x,y)$ and $E_{a,b,c}(x,y)$ with the four vertices $u,u,w,t$ chosen as in Figure \ref{newcondense}, we get the following weighted version of Lemma 4.2 in \cite{CL}.
\begin{lem}\label{con3}    Let $a$, $b$ and $c$ be non-negative integers satisfying $a\geq 2$, $b\geq4$, $d:=2b-a-2c\geq2$, and $e:=3b-2a-2c\geq2$.

$(${\rm a}$)$. If $c\geq 1$, then $\Phi(a,b,c)$ and $\Psi(a,b,c)$ both satisfy the recurrence (\ref{WR2}).

$(${\rm b}$)$. If  $c=0$, then $\Phi(a,b,c)$ and $\Psi(a,b,c)$ both satisfy also the recurrence (\ref{WR3}).
%
 \end{lem}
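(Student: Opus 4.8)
The plan is to prove Lemma~\ref{con3} exactly as Lemma~\ref{con1} was proved: apply Kuo's Condensation Theorem~\ref{kuothm} to the dual graphs of the weighted regions $D_{a,b,c}(x,y)$ and $E_{a,b,c}(x,y)$, but now with a degenerate choice of the four distinguished vertices in which two of them (the ``$u$'' slots) coincide geometrically, as indicated by the notation ``$u,u,w,t$'' in the statement preceding the lemma and by Figure~\ref{newcondense}. Concretely, for part~(a) I would place $w$ and $t$ at the two corner triangles that, upon deletion together with the forced tiles around them, peel off a copy of $D_{a-2,b-2,c-1}(x,y)$, while the ``doubled'' vertex $u$ sits at a spot whose deletion (in the two different pairings) produces either $D_{a,b,c+1}(x,y)$ or, when paired with $w$, produces $D_{a-1,b-1,c}(x,y)$ — the squared term $\bigstar(a-1,b-1,c)^2$ in \eqref{WR2} is exactly the signal that the two summands $\M(G-\{u,v\})\M(G-\{w,t\})$-type products collapse to a single square because of this coincidence. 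Part~(b) is the same computation specialized to $c=0$, where the region $D_{a,b,0}$ degenerates and, as recalled in Section~2, the parameter substitution $d=2b-a$, so that the ``$(a,b,c+1)$'' child on the right of \eqref{WR3} reappears not as $D_{a,b,1}$ alone but paired with $D_{3b-2a,2b-a,1}$; this matches the known base-case behavior established in Lemma~4.2(b) of \cite{CL}.

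The key steps, in order, are: (1) verify that the hypotheses $a\ge2$, $b\ge4$, $d\ge2$, $e\ge2$ (together with $c\ge1$ in part~(a), $c=0$ in part~(b)) guarantee that the chosen four vertices $u,u,w,t$ actually appear in cyclic order on a single face of the dual graph and lie in the correct vertex classes of the bipartite dual — this is where the numerical constraints are consumed, since they control which corner triangles exist and have the right parity; (2) for each of the four graphs $G-\{u,v\}$, $G-\{w,t\}$, $G-\{u,t\}$, $G-\{w,v\}$ and for $G-\{u,v,w,t\}$ (with the understood coincidence reducing the list of genuinely distinct sub-regions to those appearing in \eqref{WR2}/\eqref{WR3}), identify the sub-region obtained after forcing, reading it off from Figure~\ref{newcondense} for the two cases $a>c+d$ and $a\le c+d$ separately; (3) bookkeep the product $W$ of weights of the forced tiles in each case, getting explicit monomials $x^{\bullet}y^{\bullet}$ just as in \eqref{con1a1}--\eqref{con1a5}; (4) substitute everything into \eqref{kuoeq} and check that the monomial prefactors cancel to leave precisely the coefficients $x^2y^4$ and $1$ appearing in \eqref{WR2} and \eqref{WR3}. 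Since the $x=y=1$ specialization is already known to give Lemmas~4.2(a) and 4.2(b) of \cite{CL}, the only genuinely new content is the weight bookkeeping in step~(3), and correctness there can be cross-checked against the unweighted identity (the exponents must be consistent with each forced obtuse triangle contributing $x$ and each forced equilateral triangle contributing $y$, kites contributing nothing).

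The main obstacle I expect is step~(3) together with the case split in step~(2): because the geometry of $D_{a,b,c}$ and $E_{a,b,c}$ changes shape at the threshold $a=c+d$ (compare Figures~\ref{DEcombine1} and \ref{DEcombine2}), the set of forced tiles — and hence the monomial $W$ — has a different closed form in the two regimes, and one must check that \emph{both} regimes produce the \emph{same} recurrence \eqref{WR2} (resp.\ \eqref{WR3}) after cancellation. A secondary subtlety is making the ``doubled vertex'' rigorous: strictly speaking Kuo's theorem requires four \emph{distinct} vertices $u,v,w,t$, so I would actually pick $v$ and $t$ as two distinct shaded triangles that happen to be adjacent (or symmetric) in such a way that $G-\{u,v\}$ and $G-\{w,t\}$ both reduce to $D_{a-1,b-1,c}(x,y)$ after forcing; the precise choice is the one drawn in Figure~\ref{newcondense}, and verifying that these two deletions really yield isomorphic weighted sub-regions (so that their $\M$-values coincide and combine into the square) is the delicate point. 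Everything else is routine: planarity and bipartiteness of the dual graph are automatic for these triangular-lattice regions, the forced-tile arguments are local, and the final algebra is a one-line substitution.
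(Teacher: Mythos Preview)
Your approach is exactly the paper's: apply Kuo condensation (Theorem~\ref{kuothm}) to the dual graphs of $D_{a,b,c}(x,y)$ and $E_{a,b,c}(x,y)$ with the four vertices placed as in Figure~\ref{newcondense}, identify each deleted sub-region after forcing, collect the monomial weights of the forced tiles, and substitute into \eqref{kuoeq}; the paper's own proof is nothing more than the sentence ``apply Theorem~\ref{kuothm} with the vertices chosen as in Figure~\ref{newcondense}.'' One clarification will save you work: the string ``$u,u,w,t$'' preceding the lemma is a typographical slip for ``$u,v,w,t$''---there is no degenerate or doubled vertex, and the squared term $\bigstar(a-1,b-1,c)^2$ in \eqref{WR2} arises simply because the two distinct pairs $\{u,v\}$ and $\{w,t\}$ (placed symmetrically, as visible in Figure~\ref{newcondense}) force down to isomorphic copies of $D_{a-1,b-1,c}(x,y)$, so your ``secondary subtlety'' dissolves.
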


\begin{figure}\centering
\includegraphics[width=14cm]{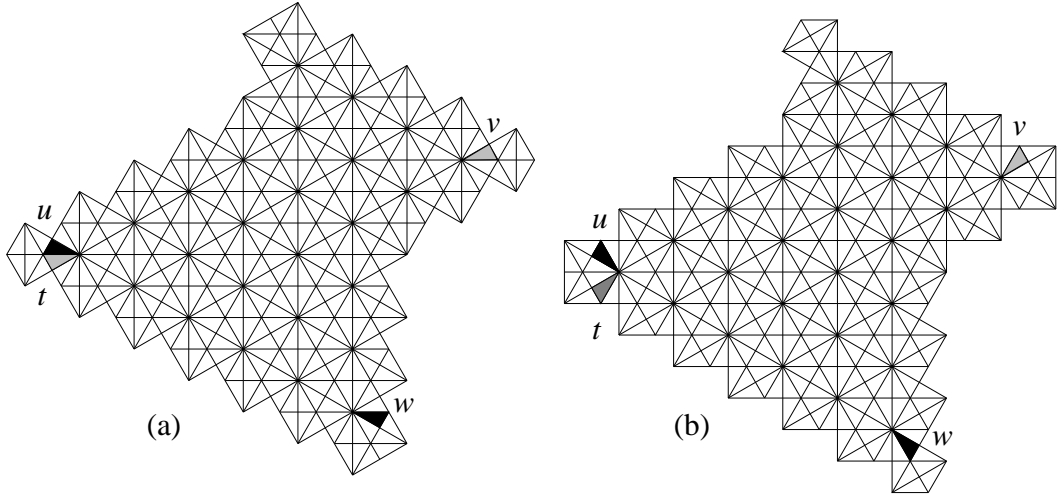}
\caption{How apply Kuo condensation in Lemma \ref{con4} to the regions (a) $D_{5,8,4}(x,y)$ and (b) $E_{5,8,4}(x,y)$. }
\label{condensation5n}
\end{figure}

A similar generalization of Lemma 4.3 in \cite{CL} is obtained by applying Kuo condensation to the dual graphs of $D_{a,b,c}(x,y)$ and $E_{a,b,c}(x,y)$ with the four vertices $u,v,w,t$ selected as in Figure \ref{condensation5n}.

 \begin{lem}\label{con4} Assume that $a,b,c$ are three non-negative integers satisfying $a\geq2$, $b\geq5$, $c\geq2$, $d:=2b-a-2c\geq0$, and $e:=3b-2a-2c\geq0$. Assume in addition that $a\leq c+d$.

 $(${\rm a}$)$. If $d\geq1$, then  $\Phi(a,b,c)$ and $\Psi(a,b,c)$  both satisfy (\ref{WR4}).
%

 $(${\rm b}$)$. If $d=0$, then the pair of functions $(\Phi(a,b,c),\Psi(a,b,c))$  satisfies the double-recurrence (\ref{WR5}).
\end{lem}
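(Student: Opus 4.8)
The plan is to prove Lemma~\ref{con4} in direct parallel with the proofs of Lemmas~\ref{con1} and~\ref{con3}, namely by a single application of Kuo's Condensation Theorem~\ref{kuothm} to the dual graphs of the weighted regions $D_{a,b,c}(x,y)$ and $E_{a,b,c}(x,y)$, with the four vertices $u,v,w,t$ chosen according to Figure~\ref{condensation5n}. Since the hypothesis here is $a\le c+d$, the relevant picture is the $a\le c+d$ contour of Figure~\ref{DEcombine2} rather than the $a>c+d$ one; the vertices $u,w$ are taken to be the black triangles and $v,t$ the shaded triangles in Figure~\ref{condensation5n}(a) for the $D$-region and in Figure~\ref{condensation5n}(b) for the $E$-region. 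One checks that, with these choices, $u,w$ lie in one colour class and $v,t$ in the other, and that $u,v,w,t$ appear in cyclic order on a single face, so that Theorem~\ref{kuothm} applies.

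The core of the argument is then the same forcing analysis used in Lemma~\ref{con1}: each of the five graphs $G$, $G-\{u,v\}$, $G-\{v,w\}$, $G-\{w,t\}$, $G-\{t,u\}$, $G-\{u,v,w,t\}$ has a collection of tiles forced into every matching once the indicated vertices are deleted, and removing the forced edges leaves a smaller region of type $D$ (resp.\ $E$) with parameters matching those in recurrence~(\ref{WR4}). Concretely, I would show
\begin{align*}
\M(G)&=x^{\alpha_0}y^{\beta_0}\,\Phi(a,b,c), & \M(G-\{u,v,w,t\})&=x^{\alpha_1}y^{\beta_1}\,\Phi(a-2,b-3,c-2),\\
\M(G-\{u,v\})&=x^{\alpha_2}y^{\beta_2}\,\Phi(a-1,b-1,c), & \M(G-\{w,t\})&=x^{\alpha_3}y^{\beta_3}\,\Phi(a-1,b-2,c-2),\\
\M(G-\{v,w\})&=x^{\alpha_4}y^{\beta_4}\,\Phi(a-2,b-2,c-1), & \M(G-\{t,u\})&=x^{\alpha_5}y^{\beta_5}\,\Phi(a,b-1,c-1),
\end{align*}
with explicit exponents $\alpha_i,\beta_i$ obtained by tallying the forced obtuse and equilateral triangles (kites contribute weight $1$). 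Substituting these into~(\ref{kuoeq}) and checking that the monomial prefactors balance, i.e.\ that $\alpha_0+\alpha_1=\alpha_2+\alpha_3=\alpha_4+\alpha_5+1$ and likewise for the $y$-exponents with the extra $+2$ coming from the $xy^2$ in~(\ref{WR4}), yields~(\ref{WR4}) for $\Phi$; the identical computation on the $E$-region gives it for $\Psi$. For part~(b), when $d=0$ the region $D_{a,b-1,c-1}$ (and its $E$-counterpart) degenerates, and—exactly as the unweighted Lemma~4.3(b) of~\cite{CL} splits into a mutually coupled pair—one of the six reduced regions is no longer of type $D$ but of type $E$ (and vice versa), which is why the two generating functions can only be shown to satisfy the intertwined double-recurrence~(\ref{WR5}) rather than~(\ref{WR4}) separately; here the second family of polynomials $\lozenge(a,b,c)$ enters precisely to record the $E$-region appearing inside the $D$-recurrence.

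The main obstacle, as in~\cite{CL}, is purely bookkeeping rather than conceptual: one must verify that the vertex choices in Figure~\ref{condensation5n} really do land in the correct colour classes and on a common face for \emph{all} admissible $(a,b,c)$ with $a\le c+d$, and—more delicately—that the forced-tile regions reduce to exactly the claimed parameter sextuples and that the accumulated weight monomials cancel on both sides of~(\ref{kuoeq}). The $d=0$ boundary case of part~(b) is the subtlest point, since there the identification of the degenerate reduced region as an $E$-type (resp.\ $D$-type) region, and the consequent appearance of $\lozenge$, has to be traced carefully; but since the $x=y=1$ specialization recovers Lemma~4.3 of~\cite{CL} verbatim, the combinatorial skeleton is already established there and only the weight-tracking is new.
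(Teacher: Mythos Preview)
Your approach is exactly the paper's: it proves Lemma~\ref{con4} simply by stating that Kuo condensation applied to the dual graphs of $D_{a,b,c}(x,y)$ and $E_{a,b,c}(x,y)$ with the vertices of Figure~\ref{condensation5n} yields the weighted analogue of Lemma~4.3 in~\cite{CL}, and your outline supplies more detail than the paper itself does. Two small slips to fix when you carry it out: in~(\ref{WR4}) \emph{both} right-hand terms carry the factor $xy^2$, so your exponent balance should read $\alpha_2+\alpha_3=\alpha_4+\alpha_5=\alpha_0+\alpha_1+1$ rather than the asymmetric relation you wrote; and in part~(b) the region that degenerates when $d=0$ is the one with parameters $(a-1,b-1,c)$ (becoming $\lozenge(c,b-1,a-1)$ in~(\ref{WR5})), not $(a,b-1,c-1)$.
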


\medskip
We now denote by $\phi(a,b,c)$ and $\psi(a,b,c)$ the expressions on the right hand sides of the equalities (\ref{maineq1}) and (\ref{maineq2}) in Theorem \ref{newmain}, respectively.
It is routine to verify that the two functions $\phi(a,b,c)$ and $\psi(a,b,c)$ satisfy the same recurrences (\ref{WR1})--(\ref{WR5}) for any $a,b,c\in \mathbb{Z}$ (If one wants a detailed verification, we recommend the proofs of Lemmas 5.1--5.3 in \cite{CL}).  This yields an inductive proof of Theorem \ref{newmain} on the perimeter of the contour $\mathcal{C}(a,b,c)$.

The base cases are the situations when at least one of the following hold:
\begin{enumerate}
\item[(1)] Perimeter of the contour if at most 14.
\item[(2)] $b\leq 4$.
\item[(3)] $c+d=2b-a-c\leq 2$.
\end{enumerate}
Similar to the proof of Theorem 3.1 in \cite{CL}. These base cases can be verified by the help of the computer package \texttt{vaxmacs} written by David Wilson\footnote{This software is available at  \texttt{http://dbwilson.com/vaxmacs/}}, together with \texttt{Maple} by \texttt{Maplesoft}.

For induction step, we assume that the theorem is true for any regions $D_{a,b,c}(x,y)$ and $E_{a,b,c}(x,y)$ with perimeter greater than or equal to $16$ (it is easy to check that the perimeter is always even). We consider several cases ans show that the number of tilings of the regions and the expressions on the right hand sides of (2.10) and (2.11) satisfy the same recurrence: (R1), (R2), $\dots$ or (R5). However, all arguments here are essentially the same as that in the induction step of the proof of  Theorem 3.1 in \cite{CL} (with the recurrences are replaced by their corresponding weighted versions). Therefore, we can finish our proof of Theorem \ref{newmain} here.

\section{Proof of Theorem \ref{weighthd}}

Before presenting the proof of Theorem \ref{weighthd}, we quote the following lemma.

\begin{lem}[Graph-Splitting Lemma; Lemma 3.6(a) in \cite{Tri1}]\label{GS}
Let $G$ be a bipartite graph, and let $V_1$ and $V_2$ be the two vertex classes.

Assume that an induced subgraph $H$ of $G$ satisfies following two conditions:
\begin{enumerate}
\item[(i)]  There are no edges of $G$ connecting a vertex in $V(H)\cap V_1$ and a vertex in $V(G-H)$.

\item[(ii)] $|V(H)\cap V_1|=|V(H)\cap V_2|$.
\end{enumerate}
Then
\begin{equation}
\M(G)=\M(H)\, \M(G-H).
\end{equation}
\end{lem}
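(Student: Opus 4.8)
The plan is to argue that conditions (i) and (ii) force every perfect matching of $G$ to restrict to a perfect matching of $H$ together with a perfect matching of $G-H$, and conversely, so that the weighted matching sums factor. First I would observe that condition (i) says that no edge of $G$ leaves $H$ through a $V_1$-vertex of $H$; equivalently, every edge incident to a vertex of $V(H)\cap V_1$ has its other endpoint in $V(H)\cap V_2$ (using bipartiteness of $G$, so edges only go between $V_1$ and $V_2$). Now fix any perfect matching $\mu$ of $G$. Each vertex of $V(H)\cap V_1$ is matched by $\mu$, and by the previous sentence its partner lies in $V(H)\cap V_2$; hence $\mu$ matches all $|V(H)\cap V_1|$ vertices of $V(H)\cap V_1$ into $V(H)\cap V_2$, consuming exactly that many vertices of $V(H)\cap V_2$. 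By condition (ii), $|V(H)\cap V_2|=|V(H)\cap V_1|$, so in fact every vertex of $V(H)\cap V_2$ is also matched by $\mu$ to a vertex of $V(H)\cap V_1$. Therefore the edges of $\mu$ incident to $V(H)$ all lie inside $H$ and form a perfect matching $\mu_H$ of $H$; the remaining edges of $\mu$ cover exactly $V(G)\setminus V(H)$ and, being edges of $G$ with both endpoints outside $H$, form a perfect matching $\mu_{G-H}$ of $G-H$.

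Next I would check the converse and the bijection: given a perfect matching $\mu_H$ of $H$ and a perfect matching $\mu_{G-H}$ of $G-H$, their union is a set of disjoint edges of $G$ covering $V(H)\sqcup V(G\setminus H)=V(G)$, hence a perfect matching of $G$; and the two assignments $\mu\mapsto(\mu_H,\mu_{G-H})$ and $(\mu_H,\mu_{G-H})\mapsto\mu_H\cup\mu_{G-H}$ are mutually inverse. Since the weight of a perfect matching is the product of the weights of its edges, and the edge set of $\mu$ is the disjoint union of those of $\mu_H$ and $\mu_{G-H}$, the weight of $\mu$ is the product of the weights of $\mu_H$ and $\mu_{G-H}$. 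Summing over all matchings and using the bijection gives
\begin{equation}
\M(G)=\sum_{\mu\in\mathcal{M}(G)}\operatorname{wt}(\mu)=\sum_{\mu_H}\sum_{\mu_{G-H}}\operatorname{wt}(\mu_H)\operatorname{wt}(\mu_{G-H})=\M(H)\,\M(G-H),
\end{equation}
which is the claimed identity.

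The only genuinely delicate point — the ``main obstacle'', though it is mild — is making sure that condition (i) really does prevent a $\mu$-edge from crossing the boundary of $H$ in \emph{both} directions, not just out of the $V_1$-side. This is where condition (ii) does its work: (i) alone handles edges at $V(H)\cap V_1$, and then the counting argument above, combined with the fact that $H$ is an \emph{induced} subgraph (so any edge of $G$ with both endpoints in $V(H)$ is already an edge of $H$), upgrades this to a full separation and rules out edges crossing at $V(H)\cap V_2$. I would also briefly note the degenerate cases where $\mathcal{M}(H)=\varnothing$ or $\mathcal{M}(G-H)=\varnothing$: in either case the argument of the first paragraph shows $\mathcal{M}(G)=\varnothing$ as well, so the identity holds with both sides equal to $0$. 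Everything else is bookkeeping, and no induction or Kuo condensation is needed here.
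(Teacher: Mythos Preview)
Your argument is correct and is the standard proof of this splitting principle: condition~(i) forces every $V(H)\cap V_1$ vertex to be matched inside $H$, condition~(ii) then forces every $V(H)\cap V_2$ vertex to be matched inside $H$ by counting, and the induced hypothesis ensures these edges belong to $H$; the bijection and weight factorization follow.

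As for comparison with the paper: there is nothing to compare, because the paper does not prove this lemma at all. It is merely quoted as Lemma~3.6(a) of \cite{Tri1} and used as a black box in the proof of Theorem~\ref{weighthd}. Your write-up therefore supplies a self-contained justification that the paper omits.
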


\medskip

\begin{proof}[Proof of Theorem \ref{weighthd}]
To specify the weight assignment on tiles of the hexagonal dungeon $HD_{a,2a,b}$, we use the notation $HD_{a,2a,b}(x,y,z)$ for its weighted version with obtuse triangle tiles, equilateral triangle tiles, and kite tiles are weighted by $x,y,z$, respectively. It means that $F(x,y,z)$ is now $\M(HD_{a,2a,b}(x,y,z))$.  We now divide the weight of each tile in the region $HD_{a,2a,b}(x,y,z)$ by $z$. We get the new weighted region $HD_{a,2a,b}\left(\frac{x}{z},\frac{y}{z},1\right)$ and obtain
\begin{equation}\label{weighteq2}
\M(HD_{a,2a,b}(x,y,z))=2^{6(3ab+2a^2)-3(3a+b)}\M\left(HD_{a,2a,b}\left(\frac{x}{z},\frac{y}{z},1\right)\right),
\end{equation}
where $6(3ab+2a^2)-3(3a+b)$ is the total number of tiles in the region $HD_{a,2a,b}$. Thus, the theorem can be reduced to the case when $z=1$.

\begin{figure}\centering
\includegraphics[width=10cm]{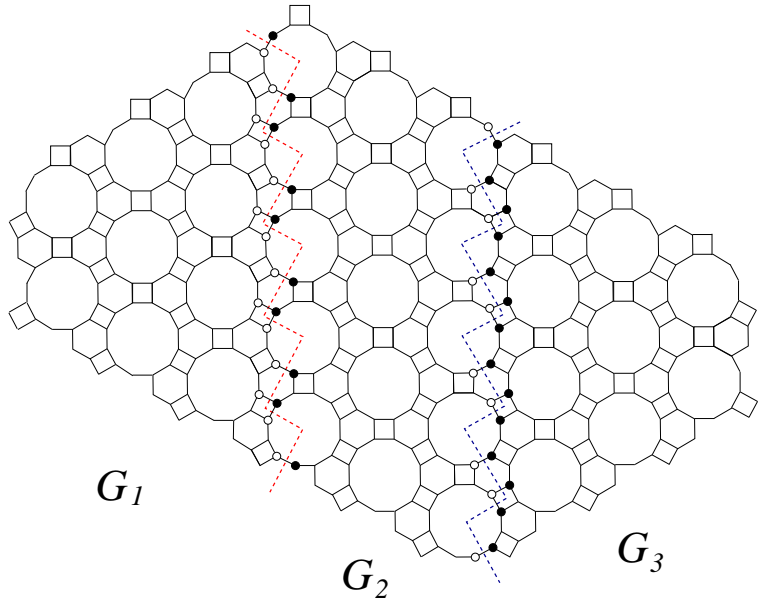}
\caption{Splitting the dual graph of the hexagonal dungeon $HD_{2,4,6}$ into three smaller graphs.}
\label{newsplit}
\end{figure}

\begin{figure}\centering
\includegraphics[width=10cm]{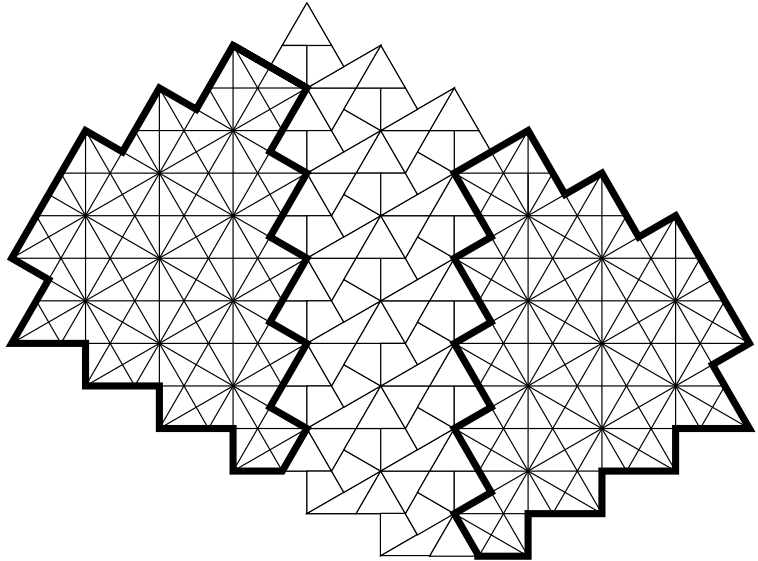}
\caption{The dual picture of Figure \ref{newsplit}.}
\label{newsplit2}
\end{figure}

We now prove the theorem in the case $z=1$.
Apply the two zigzag cuts  to the dual graph of the weighted region $HD_{a,2a,b}(x,y,1)$ as in  Figure \ref{newsplit} to divide it into three disjoint subgraphs $G_{1}$, $G_2$, and $G_3$ (in order from left to right). These subgraphs satisfy the conditions in Graph-splitting Lemma \ref{GS}, so we obtained
\begin{equation}\label{weighteq}
\M\left(HD_{a,2a,b}(x,y,1)\right)=\M\left(G_1\right)\M\left(G_2\right)\M\left(G_3\right).
\end{equation}

The graph $G_1$ and $G_3$ are both isomorphic to the dual graph of the region $D_{2a,3a,2a}(x,y)$, where each obtuse triangle tile is weighted by $x$, and each equilateral tile is weighted  by $y$ (see the regions with bold boundaries in Figure \ref{newsplit2}). Thus, $\M(G_1)=\M(G_3)=\Phi(2a,3a,2a)$. Moreover, the graph $G_2$ corresponds to the middle region in Figure \ref{newsplit2}, which has only one tiling (all tiles are forced with the pattern as in Figure \ref{newsplit2}). By counting the number of each type of tiles in the unique tiling of the middle region in Figure \ref{newsplit2}, one obtains that the weight of the perfect matching in $G_2$ is
 $x^{3a(b-2a)}y^{3a(b-2a+1)+(3a-1)(b-2a)}$. By (\ref{weighteq}), we get
\begin{equation}
\M\left(HD_{a,2a,b}(x,y,1)\right)=x^{3a(b-2a)}y^{3a(b-2a+1)+(3a-1)(b-2a)}\Phi(2a,3a,2a)^2.
\end{equation}

By definition, we obtain
$q(2a,3a,2b)=\lfloor\frac{a^2}{4}\rfloor$,
$g(2a,3a,2a)=a^2$, and
$h(2a,3a,2a)=1$ if $a$ is even, and  $y$ if $a$ is odd (we are assuming $z=1$).
Thus, we get
\begin{equation}
\M\left(HD_{a,2a,b}(x,y,1)\right)=\gamma|_{z=1}(a)x^{3ab-2a^2+3\lfloor\frac{a^2}{2}\rfloor} y^{6ab-a^2+3a-b-2\lfloor\frac{a^2}{2}\rfloor}P(x,y,1)^{2a^2}Q(x,y,1)^{\lfloor\frac{a^2}{2}\rfloor},
\end{equation}
where $\gamma|_{z=1}(a)$ is the restriction  of $\gamma(a)$ at $z=1$. This implies the theorem for $z=1$, and (\ref{weighteq2}) in turn deduces the theorem in the general case.
\end{proof}

\section{New hexagonal dungeons}


\begin{figure}\centering
\includegraphics[width=5cm]{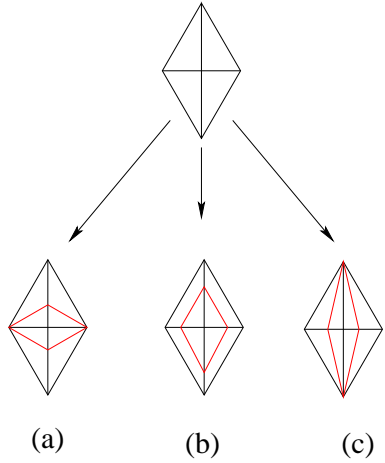}
\caption{Three replacement rules for the rhombi in the $G_2$-lattice.}
\label{replacerule}
\end{figure}

In this section, we investigate several new hexagonal dungeons.


The $G_2$-lattice can be partitioned into rhombi with two diagonals drawn in (see the top picture in Figure \ref{replacerule}).  Apply the replacing rule (a) in Figure \ref{replacerule} to all the rhombi, and denote by $G_{2}^{(1)}$ the resulting lattice. We have a variant of the hexagonal dungeon $HD_{a,2a,b}$ as in Figure \ref{hexagondnew1} (see the region restricted by the dark bold jagged contour). Denote by $HD^{(1)}_{a,2a,b}$ the new hexagonal dungeon.


%

\begin{figure}\centering
\includegraphics[width=10cm]{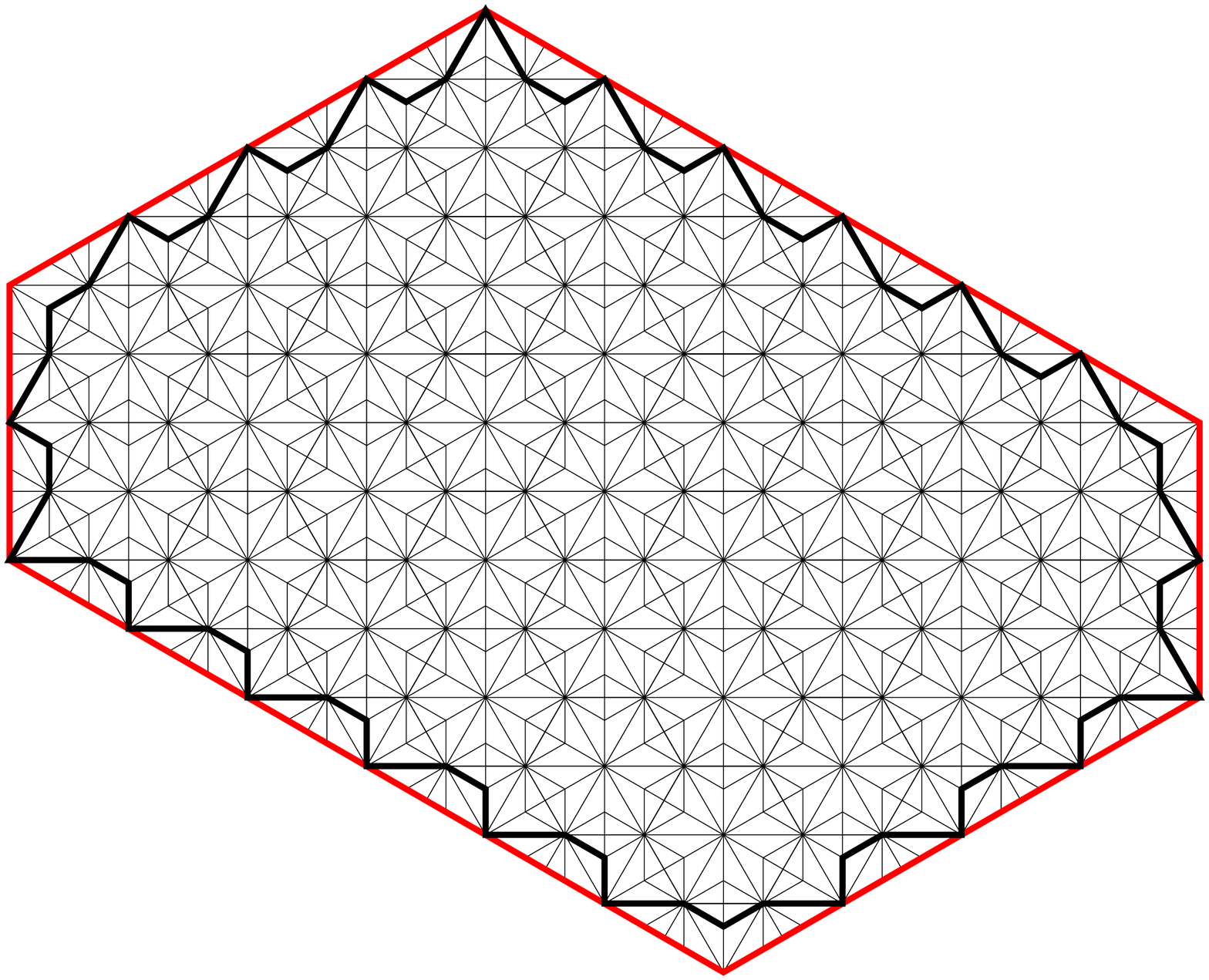}
\caption{The variant $HD^{(1)}_{2,4,6}$ of the hexagonal dungeon $HD_{2,4,6}$ on the $G_2^{(1)}$-lattice.}
\label{hexagondnew1}
\end{figure}

\begin{cor}\label{v2}
%
 Assume that $a$ and $b$ are two positive integers, so that $b\geq 2a$. Then
\begin{equation}
\M\left(HD^{(1)}_{a,2a,b}\right)
=\gamma^{(1)}(a) 2^{9a^2-6a-3\lfloor\frac{a^2}{2}\rfloor}3^{3a^2+2\lfloor\frac{a^2}{2}\rfloor+6ab+3a-b+\gamma}17^{2a^2},
\end{equation}
where $\gamma^{(1)}(a)$ is 1 if $a$ is even, and $9/4$ if $a$ is odd.
\end{cor}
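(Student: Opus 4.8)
The plan is to deduce Corollary~\ref{v2} from the Weighted Hexagonal Dungeon Theorem~\ref{weighthd} by choosing the right weights $(x,y,z)$ so that the tiling generating function $F(x,y,z)$ collapses to a counting statement for tilings of the modified dungeon $HD^{(1)}_{a,2a,b}$. The key observation is that the replacement rule~(a) in Figure~\ref{replacerule} is a local substitution on each rhombus of the $G_2$-lattice, so each tiling of $HD^{(1)}_{a,2a,b}$ corresponds to a tiling of $HD_{a,2a,b}$ together with a bounded number of independent local choices; equivalently, the dual graph of $HD^{(1)}_{a,2a,b}$ is obtained from that of $HD_{a,2a,b}$ by replacing certain edges/vertices according to rule~(a). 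I would first work out, for a single rhombus under rule~(a), how the three tile types of the original dungeon (obtuse triangle, equilateral triangle, kite) get ``blown up,'' and count in how many ways each blown-up piece can itself be tiled. This yields multiplicities $\alpha$, $\beta$, $\delta$ (concrete small integers, presumably powers of $2$ and $3$) so that
\begin{equation}
\M\left(HD^{(1)}_{a,2a,b}\right)=F(\alpha,\beta,\delta)
\end{equation}
after the identification of tiles.

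Once those three substitution multiplicities are pinned down, the rest is bookkeeping: substitute $x=\alpha$, $y=\beta$, $z=\delta$ into the closed form in Theorem~\ref{weighthd} and simplify. The two polynomial factors $P(x,y,z)$ and $Q(x,y,z)$ should evaluate, at this special point, to $17$ (times a power of $2$ or $3$) and to a product of small prime powers respectively — this is exactly the mechanism that explains the appearance of $17$ in place of the $13$ and $14$ of the unweighted formula. I would evaluate $P(\alpha,\beta,\delta)$ and $Q(\alpha,\beta,\delta)$ explicitly, track the exponents $3ab-2a^2+3\lfloor a^2/2\rfloor$, $6ab-a^2+3a-b-2\lfloor a^2/2\rfloor$, $9ab+3a^2-7\lfloor a^2/2\rfloor-2b-12a$ coming from the monomial prefactor, and combine them with the $2a^2$-th and $\lfloor a^2/2\rfloor$-th powers of $P$ and $Q$. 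The parity factor $\gamma(a)=(y/z)^{1-(-1)^a}$ becomes $(\beta/\delta)^{1-(-1)^a}$, which for $a$ odd is $(\beta/\delta)^2$; matching this against the stated $\gamma^{(1)}(a)=9/4$ when $a$ is odd fixes (and cross-checks) the ratio $\beta/\delta=3/2$.

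The main obstacle I expect is the first step: correctly reading off the substitution multiplicities from Figure~\ref{replacerule}(a) and verifying that the correspondence between tilings of $HD^{(1)}_{a,2a,b}$ and weighted tilings of $HD_{a,2a,b}$ is exact — in particular, that the local choices on distinct rhombi are genuinely independent, that no boundary rhombus of the jagged contour behaves anomalously, and that every tile of $HD^{(1)}_{a,2a,b}$ lies inside exactly one replaced rhombus so the weight really does factor as $x^m y^n z^l$ with $(x,y,z)=(\alpha,\beta,\delta)$. If a boundary effect does occur, one would absorb it into an extra monomial correction, which is why the exponent of $3$ in the statement carries the mysterious ``$+\gamma$'' term; I would identify that correction by comparing total tile counts on the two lattices. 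After that, the collapse of $P$ and $Q$ and the exponent arithmetic are routine, and can be confirmed against small cases (e.g.\ $a=1$, $b=2$) using the explicit polynomials, exactly as the base cases of Theorem~\ref{newmain} were checked.
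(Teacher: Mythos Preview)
Your high-level strategy --- reduce $\M(HD^{(1)}_{a,2a,b})$ to an evaluation of the tiling generating function $F(x,y,z)$ at a special point and then invoke Theorem~\ref{weighthd} --- matches the paper's, and your cross-check $\beta/\delta=3/2$ from the parity factor $\gamma^{(1)}(a)=9/4$ lands on exactly the right ratio. But the mechanism you propose for the reduction does not work as stated.

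The claim that each tile of $HD_{a,2a,b}$ ``blows up'' under rule~(a) to a region that can be independently tiled in $\alpha$, $\beta$, or $\delta$ ways, with $\alpha,\beta,\delta$ positive integers, is not how the two lattices are related: tiles of $G_2^{(1)}$ do not respect the boundaries of tiles of $G_2$, so there is no tile-by-tile bijection of the kind you describe, and in particular you will not find integer multiplicities. What the paper does instead is apply \emph{urban renewal} (Lemma~\ref{spider}) to the dual graph of $HD^{(1)}_{a,2a,b}$: each of the $C=(6a-1)(b-2a)+3a(b-2a+1)+4a(2a-1)+4a(4a-1)$ square faces is replaced, contributing a factor of~$2$ and creating edges of weight $1/2$; after merging the resulting parallel edges (weights $1$ and $1/2$) into single edges of weight $3/2$, one obtains precisely the dual graph of $HD_{a,2a,b}(1/2,3/2,1)$. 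The upshot is
\[
\M\bigl(HD^{(1)}_{a,2a,b}\bigr)=2^{C}\,F\!\left(\tfrac12,\tfrac32,1\right),
\]
with \emph{non-integer} weights $(x,y,z)=(1/2,3/2,1)$ and an explicit external factor $2^{C}$ that your formulation $\M(HD^{(1)})=F(\alpha,\beta,\delta)$ has no slot for. Once this identity is in hand, your second and third paragraphs (evaluate $P$ and $Q$, track the monomial exponents) go through verbatim; the missing ingredient is the urban-renewal step, not the bookkeeping.
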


\begin{figure}\centering
\includegraphics[width=10cm]{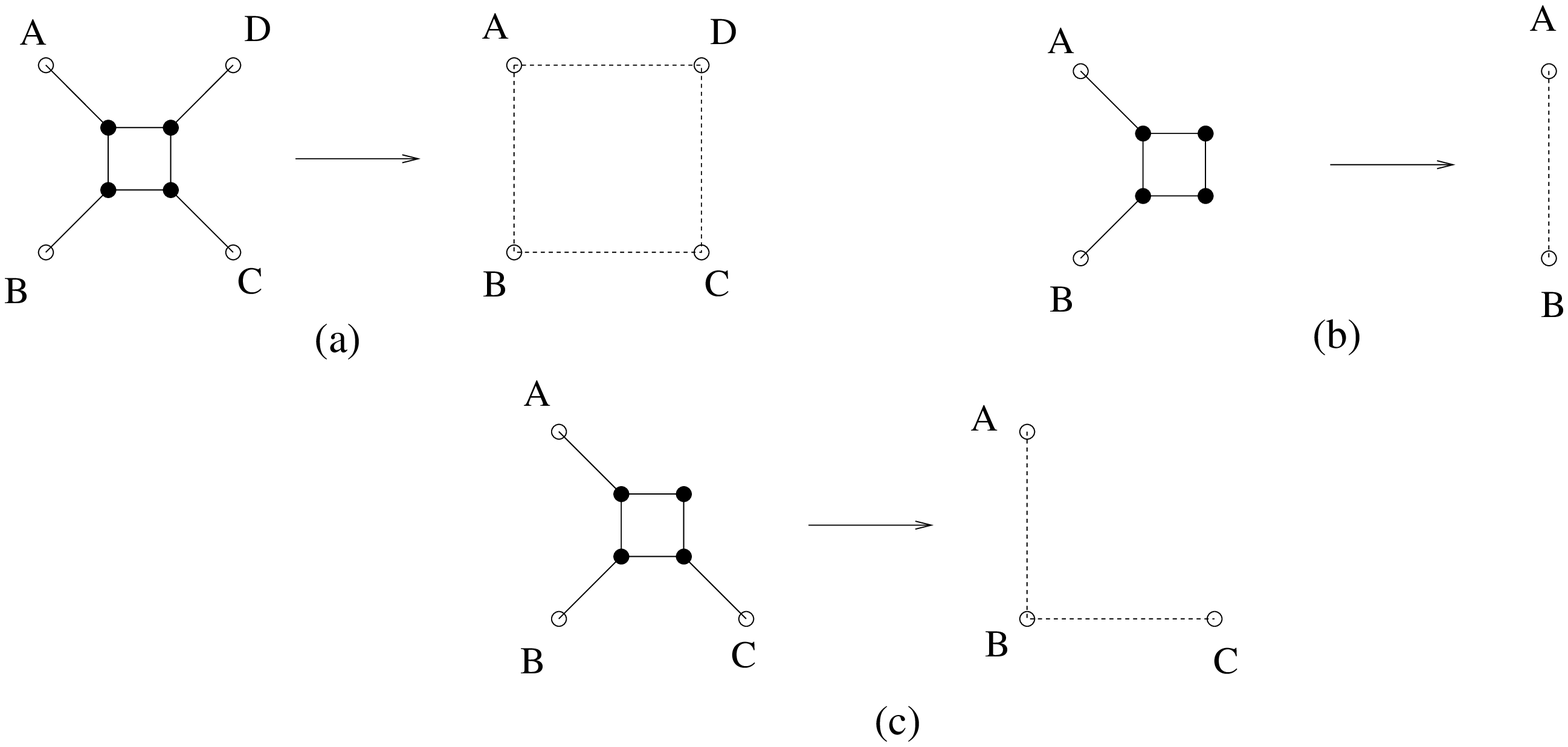}
\caption{Urban renewal trick.}
\label{urban}
\end{figure}

%

Next, we quote a well-known subgraph replacement trick called \textit{urban renewal}, which was first discovered by Kuperberg.

\begin{lem}[Urban renewal]\label{spider}
 Let $G$ be a weighted graph. Assume that $G$ has a subgraph $K$ as one of the graphs on the left column in Figure \ref{urban}, where only white vertices can have neighbors outside $K$, and where all edges have weight $1$. Let $G'$ be the weighted graph obtained from $G$ by replacing $K$ by its corresponding graph $K'$ on the right as in Figure \ref{urban}, where all dotted edges have weight $\frac{1}{2}$. Then we always have $\M(G)=2\M(G')$.
 \end{lem}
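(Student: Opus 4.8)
The statement to prove is the Urban Renewal Lemma (Lemma~\ref{spider}). Let me think about how to prove this.

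\textbf{Plan.} The urban renewal trick is a local subgraph replacement identity: we have a small graph $K$ (with distinguished white boundary vertices that may connect outside, and black internal vertices whose only neighbors are inside $K$), and we replace it by $K'$ where the internal structure is changed and some edges are reweighted to $1/2$. The claim $\M(G) = 2\M(G')$ is entirely local, so the strategy is to condition on the behavior of perfect matchings at the boundary vertices of $K$.

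First I would reduce to a purely local computation. Since the black (non-white) vertices of $K$ have no neighbors outside $K$, in any perfect matching of $G$ they must be matched to vertices inside $K$. Fix the set $S$ of white boundary vertices of $K$ that get matched to vertices \emph{outside} $K$ in a given matching $M$ of $G$; the remaining white vertices of $K$ together with all black vertices of $K$ must be perfectly matched using only edges inside $K$. So $\M(G) = \sum_S \M(G \setminus K;\, S)\cdot m_K(S)$, where $\M(G\setminus K; S)$ counts (weighted) matchings of the rest of the graph that cover exactly the vertices outside $K$ plus the white vertices in $S$ via edges leaving $K$ — wait, more carefully: one groups the edges of $M$ into those lying entirely outside $K$, those crossing the boundary, and those inside $K$; since crossing edges have weight $1$, the total weight factors, and $m_K(S)$ is the weighted sum of perfect matchings of the induced subgraph of $K$ on $V(K)\setminus S$. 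The same bookkeeping applies to $G'$ with the \emph{same} external part and the \emph{same} crossing edges (the white vertices and their outside neighbors are unchanged under urban renewal), giving $\M(G') = \sum_S \M(G\setminus K; S)\cdot m_{K'}(S)$.

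Therefore it suffices to check, for each of the finitely many graphs $K$ in Figure~\ref{urban} and each admissible subset $S$ of its white vertices, that $m_K(S) = 2\, m_{K'}(S)$. This is a finite case check: for the standard four-white-vertex square $K$ (a black center joined to four white corners, with the square edges), the nonzero contributions come from $S$ of even size with a suitable parity condition, and one verifies directly that the generating polynomial in the edge weights matches twice that of $K'$ (the ``crossed square'' with weights $\tfrac12$ on the relevant edges); the other graphs in the figure (the degenerate versions with two or three white vertices, obtained by contracting/removing boundary vertices) are handled the same way, and some reduce to even simpler series-parallel simplifications (a path of length two through a degree-two vertex contracts to a single edge whose weight is the product, etc.).

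\textbf{Main obstacle.} The conceptual content is the factorization argument, which is routine but needs to be stated carefully: the key points are that black vertices cannot escape $K$, that boundary-crossing edges all have weight $1$ so they factor out cleanly, and that $G$ and $G'$ share the identical ``outside'' (same white vertices, same external neighbors, same crossing edges). Once that reduction is in place, the remaining work is the finite verification $m_K(S) = 2 m_{K'}(S)$ for each figure, which is elementary polynomial arithmetic in the edge weights; the only mild annoyance is making sure every degenerate case in Figure~\ref{urban} is covered and that the parity/support conditions on admissible $S$ are correctly identified so that the matched identity is exact rather than just up to the nonmatchable cases (where both sides vanish). I would present the factorization lemma in one paragraph and then simply tabulate the local check, as is standard for urban renewal in the literature (referring to Kuperberg, or to the treatments in the references, for the detailed weight tables).
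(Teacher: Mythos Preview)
Your approach is correct and is exactly the standard proof of urban renewal: factor $\M(G)$ over the possible subsets $S$ of white vertices matched externally, observe that the external contribution is identical for $G$ and $G'$, and reduce to the finite local check $m_K(S)=2\,m_{K'}(S)$. Note, however, that the paper does not actually prove this lemma; it merely quotes it as a well-known trick attributed to Kuperberg, so there is no ``paper's proof'' to compare against.
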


We have a small observation as follows. If there are several parallel edges connecting the same vertices $u$ and $v$ in a graph $G$, then the number of perfect matchings of $G$ doesn't change if the  we can replace these parallel edges by a new single edge connecting $u$ and $v$ with the weight equal to the sum of weights of the original edges (see Figure \ref{edgereplace}).

\begin{figure}\centering
\includegraphics[width=10cm]{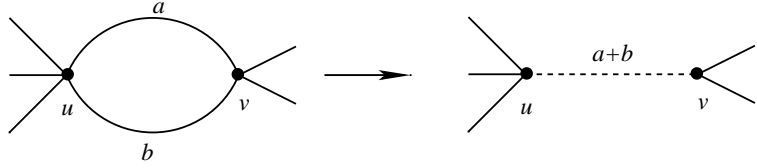}
\caption{Replacing parallel edges by a single edge.}
\label{edgereplace}
\end{figure}

\begin{figure}\centering
\includegraphics[width=12cm]{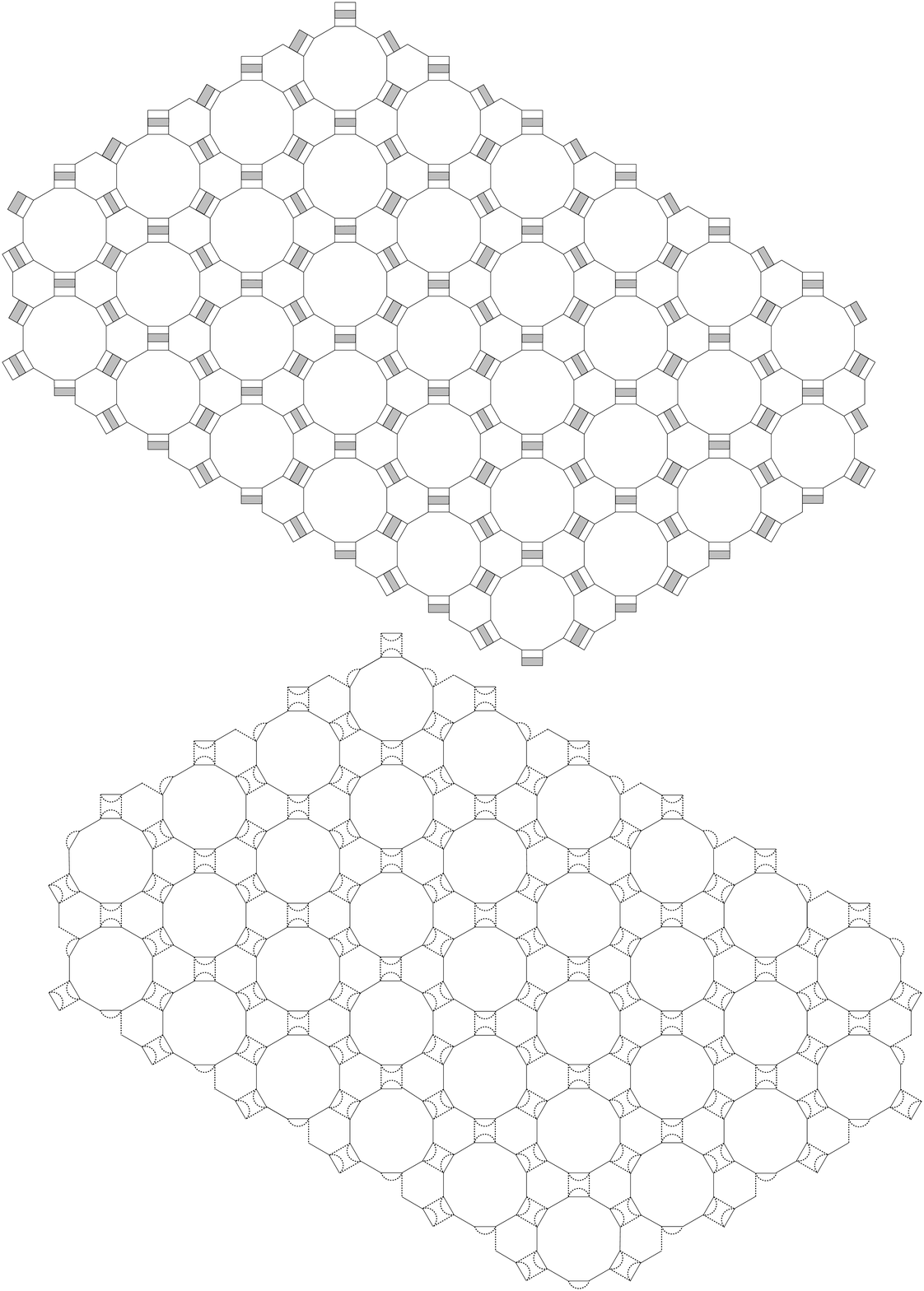}
\caption{The dual graph of $HD^{(1)}_{2,4,6}$ (upper) and the graph obtained from it by applying urban renewal (lower). The dotted edges have weight $1/2$.}
\label{dualdungeon1}
\end{figure}

\begin{proof}[ Proof of Corollary \ref{v2}]
Consider the dual graph $G$ of $HD^{(1)}_{a,2a,b}$ (see the upper picture in Figure \ref{dualdungeon1}). Apply suitable replacements in Lemma \ref{spider} around all shaded rectangles in $G$. These replacements create several pairs of parallel edges (consisting of an edge of weight $1$ and an edge of weight $1/2$) in the resulting graph (see the lower picture in Figure \ref{dualdungeon1}).  Next, we replace each pair of parallel edges by a new single edge of weight $3/2$. This way the dual graph $G$ of  $HD^{(1)}_{a,2a,b}$ is transformed into the dual graph of the weighted hexagonal dungeon  $HD_{a,2a,b}(1/2,3/2,1)$. Since there are $C:=(6a-1)(b-2a)+3a(b-2a+1)+4a(2a-1)+4a(4a-1)$ shaded rectangles in $G$, we have
\begin{equation}
\M\left(HD^{(1)}_{a,2a,b}\right)=\M(G)=2^{C} \M\left(HD_{a,2a,b}(1/2,3/2,1)\right).
\end{equation}
Then the corollary follows from Theorem \ref{weighthd}.
\end{proof}


Next, we consider the application of the replacement rule (b) in Figure \ref{replacerule} to all rhombi of $G_2$-lattice, and denote by $G_{2}^{(2)}$ the resulting lattice. On the $G_2^{(2)}$-lattice, we have a new version of the hexagonal dungeons illustrated by the region restricted by the dark bold contour in Figure \ref{hexagondnew2}.

%
%
%

\begin{figure}\centering
\includegraphics[width=10cm]{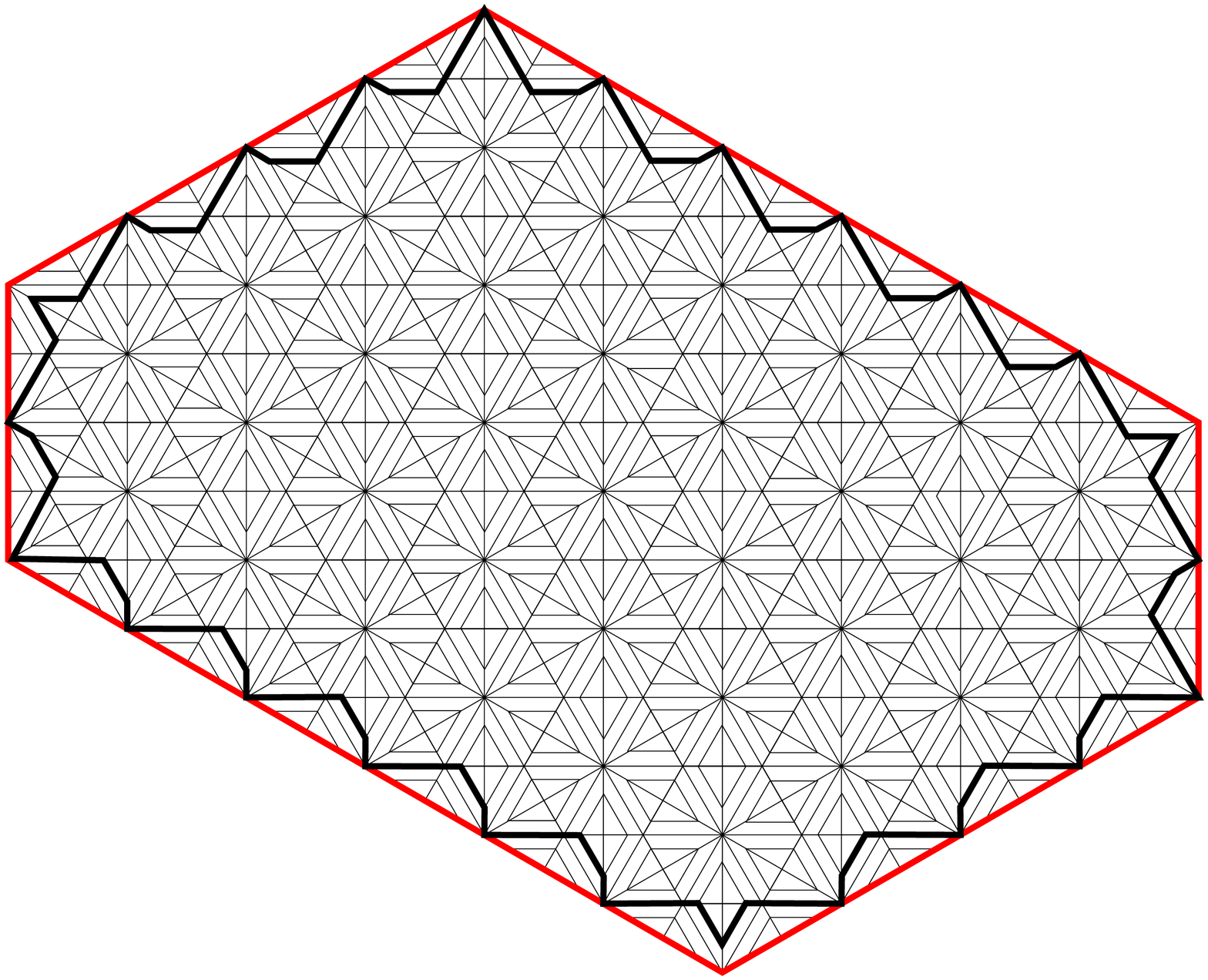}
\caption{The variant $HD^{(2)}_{2,4,6}$ of the hexagonal dungeon $HD_{2,4,6}$ on the $G_2^{(2)}$-lattice.}
\label{hexagondnew2}
\end{figure}

\begin{figure}\centering
\includegraphics[width=13cm]{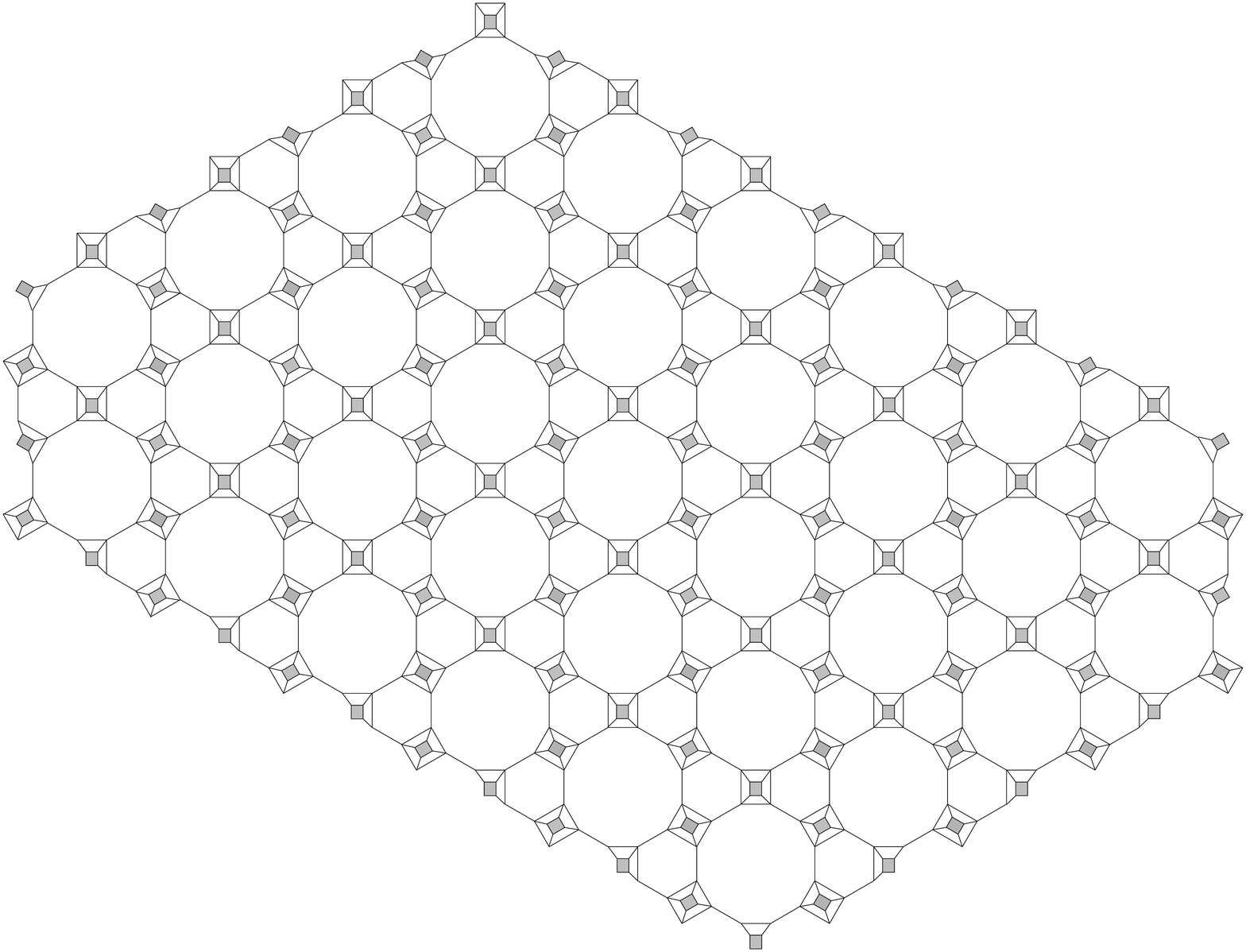}
\caption{The dual graph of $HD^{(2)}_{2,4,6}$.}
\label{dualdungeon3}
\end{figure}

\begin{cor}\label{v3}
%
 Assume that $a$ and $b$ are two positive integers, so that $b\geq 2a$. Then
\begin{equation}
\M\left(HD^{(2)}_{a,2a,b}\right)
=\gamma^{(1)}(a)2^{9a^2-6a-\lfloor\frac{a^2}{2}\rfloor}3^{9ab-3a^2+\lfloor\frac{a^2}{2}\rfloor+3a-b}5^{2a^2}13^{4a^2+\lfloor\frac{a^2}{2}\rfloor}109^{\lfloor\frac{a^2}{2}\rfloor},
\end{equation}
where $\gamma^{(1)}(a)$ is defined as in Corollary \ref{v2}.
\end{cor}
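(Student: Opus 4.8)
The plan is to mimic the proof of Corollary \ref{v2}: realize the dual graph of $HD^{(2)}_{a,2a,b}$ as a graph obtainable, via a sequence of local subgraph replacements, from the dual graph of a \emph{weighted} ordinary hexagonal dungeon $HD_{a,2a,b}(x_0,y_0,z_0)$ for suitable constants $x_0,y_0,z_0$, and then invoke Theorem \ref{weighthd}. First I would identify exactly which lattice $G_2^{(2)}$ is — replacement rule (b) of Figure \ref{replacerule} applied to every rhombus — and draw the dual graph $G$ of $HD^{(2)}_{a,2a,b}$, as indicated in Figure \ref{dualdungeon3}. Then I would apply the urban renewal trick (Lemma \ref{spider}) around each of the appropriate shaded cells of $G$; as in the previous corollary, each urban renewal contributes a factor of $2$ and creates bundles of parallel edges. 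Collapsing each bundle of parallel edges into a single edge whose weight is the sum of the constituent weights (the small observation illustrated in Figure \ref{edgereplace}) leaves a graph isomorphic to the dual graph of $HD_{a,2a,b}(x_0,y_0,z_0)$ for some explicit rational weights $x_0,y_0,z_0$ that I read off from the combinatorics of rule (b).

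Concretely, I expect the key steps to be: (1) set up the dual graph $G$ and check that the urban-renewal pattern tiles the relevant portion of $G$ exactly (no overlaps, white/black coloring consistent with Lemma \ref{spider}); (2) count the number $C'$ of cells where urban renewal is applied, as an explicit quadratic polynomial in $a$ and $b$ — this will have the same shape as the count $C=(6a-1)(b-2a)+3a(b-2a+1)+4a(2a-1)+4a(4a-1)$ in Corollary \ref{v2}, possibly with different coefficients; (3) determine the resulting edge weights $x_0,y_0,z_0$ after merging parallel edges, so that $\M(G)=2^{C'}\,\M\!\left(HD_{a,2a,b}(x_0,y_0,z_0)\right)$; (4) substitute $x_0,y_0,z_0$ into the closed form of Theorem \ref{weighthd} and simplify. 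For step (4) the main arithmetic task is to evaluate the three polynomial factors $P(x_0,y_0,z_0)$, $Q(x_0,y_0,z_0)$, and the monomial prefactor at the chosen weights; the target formula tells me what to expect, namely $P(x_0,y_0,z_0)$ should factor (up to a power of $2,3$) as something involving $5$ and $13$, $Q(x_0,y_0,z_0)$ should produce the factors $13$ and $109$, and $\gamma(a)$ should specialize to $\gamma^{(1)}(a)$. I would verify $P(x_0,y_0,z_0)=\tfrac{\text{const}}{2^{\alpha}3^{\beta}}\cdot 5\cdot 13^{2}$-type identities by direct substitution, and similarly for $Q$.

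The hard part will be pinning down the exact weights $x_0,y_0,z_0$ and the cell count $C'$ — that is, getting the bookkeeping of the urban-renewal-plus-edge-merging reduction exactly right so that the final powers of $2,3,5,13,109$ match the stated formula. This is not conceptually difficult but is the step where sign/exponent errors creep in; one safeguard is to check the identity for the base case $HD^{(2)}_{2,4,6}$ against a direct machine computation (as was done for the base cases of Theorem \ref{newmain}), and another is to confirm that the total tile/vertex count is consistent, i.e. that $2C' + (\text{number of edges in the reduced matching})$ accounts for every vertex of $G$. Once the weights are fixed, the remainder is the routine polynomial evaluation described above, and the corollary follows immediately from Theorem \ref{weighthd} exactly as in the proof of Corollary \ref{v2}.
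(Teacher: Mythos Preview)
Your plan is correct and matches the paper's proof essentially line for line: apply urban renewal at the shaded cells of the dual graph, merge the resulting parallel edges, and reduce to Theorem~\ref{weighthd}. The paper is slightly more specific than your outline: the cell count is exactly the same $C$ as in Corollary~\ref{v2} (not a possibly different $C'$), and the resulting weights are $(x_0,y_0,z_0)=(3/2,\,3/2,\,1)$, after which the substitution into Theorem~\ref{weighthd} yields the stated formula directly.
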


\begin{proof}
Similar to the previous corollary, the dual graph of $HD^{(2)}_{a,2a,b}$  can be transformed into the  dual graph of $HD_{a,2a,b}(3/2,3/2,1)$ by applying urban renewal around all $C$ shaded squares ($C$ is defined as in Corollary \ref{v2}), and replacing each pair of parallel edges by a single edge with weight $3/2$ (see Figure \ref{dualdungeon3}). We have
\begin{equation}
\M\left(HD^{(2)}_{a,2a,b}\right)=2^{C} \M\left(HD_{a,2a,b}(3/2,3/2,1)\right),
\end{equation}
and the corollary follows again from Theorem \ref{weighthd}.
\end{proof}

%

\begin{figure}\centering
\includegraphics[width=10cm]{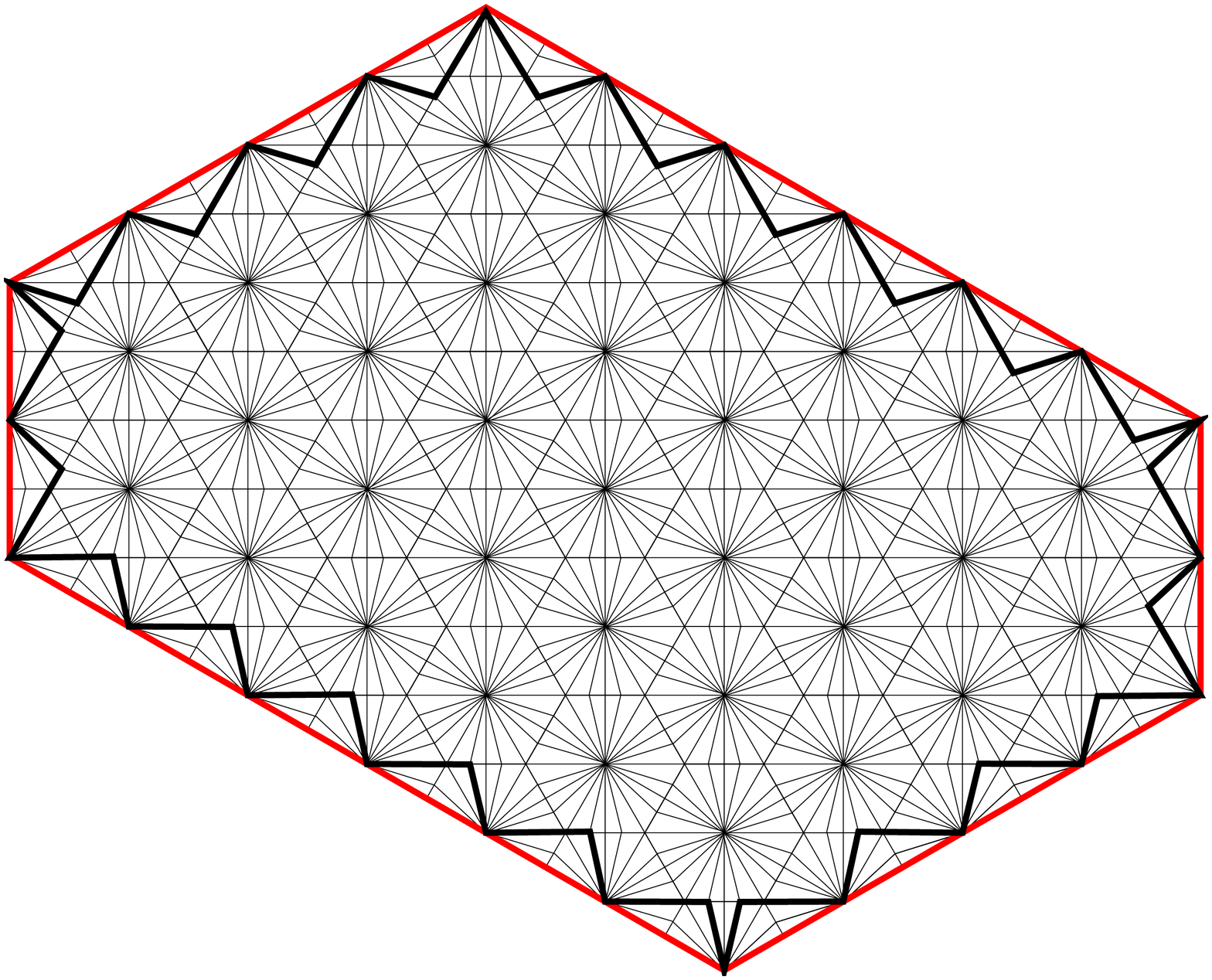}
\caption{The variant $HD^{(3)}_{2,4,6}$ of the hexagonal dungeon $HD_{2,4,6}$ on the $G_2^{(3)}$-lattice.}
\label{hexagondnew3}
\end{figure}

\begin{figure}\centering
\includegraphics[width=13cm]{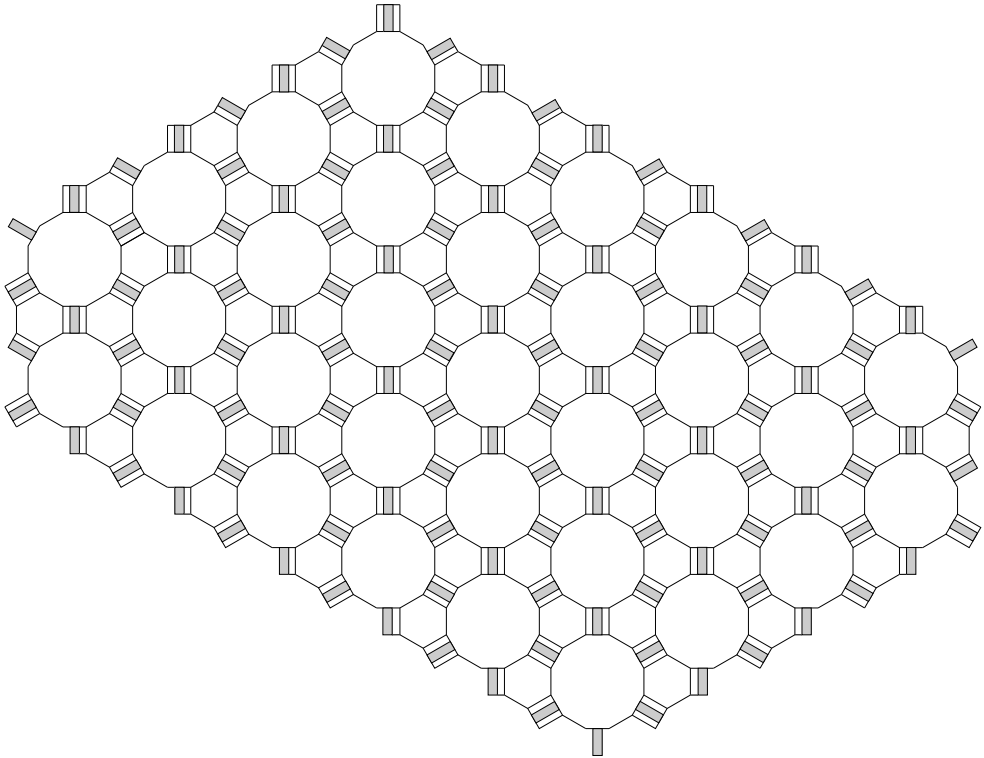}
\caption{The dual graph of $HD^{(3)}_{2,4,6}$.}
\label{dualdungeon2}
\end{figure}

Finally, application of the replacement (c)  in Figure \ref{replacerule}  gives us a new lattice $G_2^{(3)}$ as in Figure \ref{hexagondnew3}. On the new lattice, we get the variant $HD^{(3)}_{a,2a,b}$ of region  $H_{a,2a,b}$ as the region restricted by dark bold contour in Figure \ref{hexagondnew3}. The tilings of the above new regions are enumerated by powers of $2$, $3$, $5$, and $193$ as follows.

\begin{cor}\label{v1}
 Assume that $a$ and $b$ are two positive integers, so that $b\geq 2a$. Then
\begin{equation}\label{coreq3}
\M\left(HD^{(3)}_{a,2a,b}\right)
=\gamma^{(3)}(a)2^{9a^2-6a-2\lfloor\frac{a^2}{2}\rfloor}3^{3ab-2a^2+5\lfloor\frac{a^2}{2}\rfloor}5^{\lfloor\frac{a^2}{2}\rfloor}193^{2a^2},
\end{equation}
where $\gamma^{(3)}(a)$ is 1 if $a$ is even, and $1/4$ if $a$ is odd.
\end{cor}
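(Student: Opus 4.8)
The plan is to reuse, essentially verbatim, the argument behind Corollaries~\ref{v2} and~\ref{v3}. The region $HD^{(3)}_{a,2a,b}$ lives on the lattice $G_2^{(3)}$, obtained from $G_2$ by the local replacement rule (c) of Figure~\ref{replacerule}, so its dual graph should be reducible, via the urban renewal trick, to the dual graph of an \emph{ordinary} weighted hexagonal dungeon $HD_{a,2a,b}(x_0,y_0,1)$; Theorem~\ref{weighthd} then finishes the count.

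First I would pass to the dual graph $G$ of $HD^{(3)}_{a,2a,b}$ (Figure~\ref{dualdungeon2}). Since $G_2^{(3)}$ is built from $G_2$ by applying rule (c) to every rhombus-with-diagonals cell, $G$ contains, for each such cell, one copy of a gadget from the left column of Figure~\ref{urban}. I would apply Lemma~\ref{spider} (urban renewal) around each of these $C$ gadgets, where I expect $C=(6a-1)(b-2a)+3a(b-2a+1)+4a(2a-1)+4a(4a-1)$ --- the same count as in Corollary~\ref{v2}, because $HD^{(3)}_{a,2a,b}$ sits on the same hexagonal contour. Each application multiplies the weighted matching number by $2$ and creates bundles of parallel edges whose weights lie in $\{1,\tfrac12\}$. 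By the parallel-edge observation of Figure~\ref{edgereplace}, I would collapse each bundle to a single edge carrying the sum of its weights and then check cell by cell that the resulting graph is exactly the dual graph of $HD_{a,2a,b}(x_0,y_0,1)$, with obtuse triangles weighted $x_0$, equilateral triangles weighted $y_0$, and kites weighted $1$. The figure should force $(x_0,y_0)=(3/2,1/2)$: this is consistent with $\gamma^{(3)}(a)$ being the specialization $\gamma(a)\big|_{z=1,\,y=1/2}$ (equal to $1$ for even $a$ and to $1/4$ for odd $a$), and with $P(3/2,1/2,1)=\tfrac{193}{8}$, which is where the factor $193^{2a^2}$ comes from. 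This gives
\[
\M\bigl(HD^{(3)}_{a,2a,b}\bigr)=2^{C}\,\M\bigl(HD_{a,2a,b}(3/2,1/2,1)\bigr).
\]

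Finally I would evaluate the generating function of Theorem~\ref{weighthd} at $(x,y,z)=(3/2,1/2,1)$: substitute $P(3/2,1/2,1)=\tfrac{193}{8}$, the value of $Q(3/2,1/2,1)$, and $\gamma|_{z=1}(a)$; expand the monomial prefactor $x^{3ab-2a^2+3\lfloor a^2/2\rfloor}\,y^{6ab-a^2+3a-b-2\lfloor a^2/2\rfloor}$; multiply through by $2^{C}$; and collect everything into powers of $2$, $3$, $5$, $193$ times the rational factor $\gamma^{(3)}(a)$. Simplifying the resulting exponents --- the floor-function arithmetic being of the same kind already performed in the proof of Theorem~\ref{weighthd} --- should produce formula~(\ref{coreq3}).

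I expect the combinatorial bookkeeping of the reduction to be the only real obstacle: one must pin down exactly which gadget of Figure~\ref{urban} replacement rule (c) yields, confirm that urban renewal followed by parallel-edge collapsing lands precisely on $HD_{a,2a,b}(3/2,1/2,1)$ with no stray vertices or edges, and verify that the number of gadgets is indeed the same $C$ as in the earlier corollaries. Once that reduction is secured, the remaining evaluation of Theorem~\ref{weighthd} and the exponent arithmetic are routine, mirroring the closing lines of Corollaries~\ref{v2} and~\ref{v3}.
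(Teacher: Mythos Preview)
Your proposal is correct and follows essentially the same approach as the paper: apply urban renewal at all $C$ shaded rectangles (with $C$ as in Corollary~\ref{v2}), collapse parallel edges to reduce the dual graph of $HD^{(3)}_{a,2a,b}$ to that of $HD_{a,2a,b}(3/2,1/2,1)$, and then invoke Theorem~\ref{weighthd}. The paper's own proof is just as terse as that of Corollaries~\ref{v2} and~\ref{v3}, so your extra remarks on $P(3/2,1/2,1)=193/8$ and on $\gamma^{(3)}(a)$ arising from $\gamma(a)|_{y=1/2,z=1}$ are additional detail rather than a different method.
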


\begin{proof}
This corollary can be obtained similarly to Corollaries \ref{v2} and \ref{v3}. We apply urban renewal at all $C$ shaded rectangles, where $C$ is defined as in Corollary \ref{v2} (see Figure \ref{dualdungeon2}). Then we replace each pair of parallel edges by a single edge of weight $3/2$. This way, we transform the dual graph of $HD^{(3)}_{a,2a,b}$ into the dual graph of  $HD_{a,2a,b}(3/2,1/2,1)$.  Again, Theorem \ref{weighthd} deduces (\ref{coreq3}).
\end{proof}

\begin{figure}\centering
\includegraphics[width=8cm]{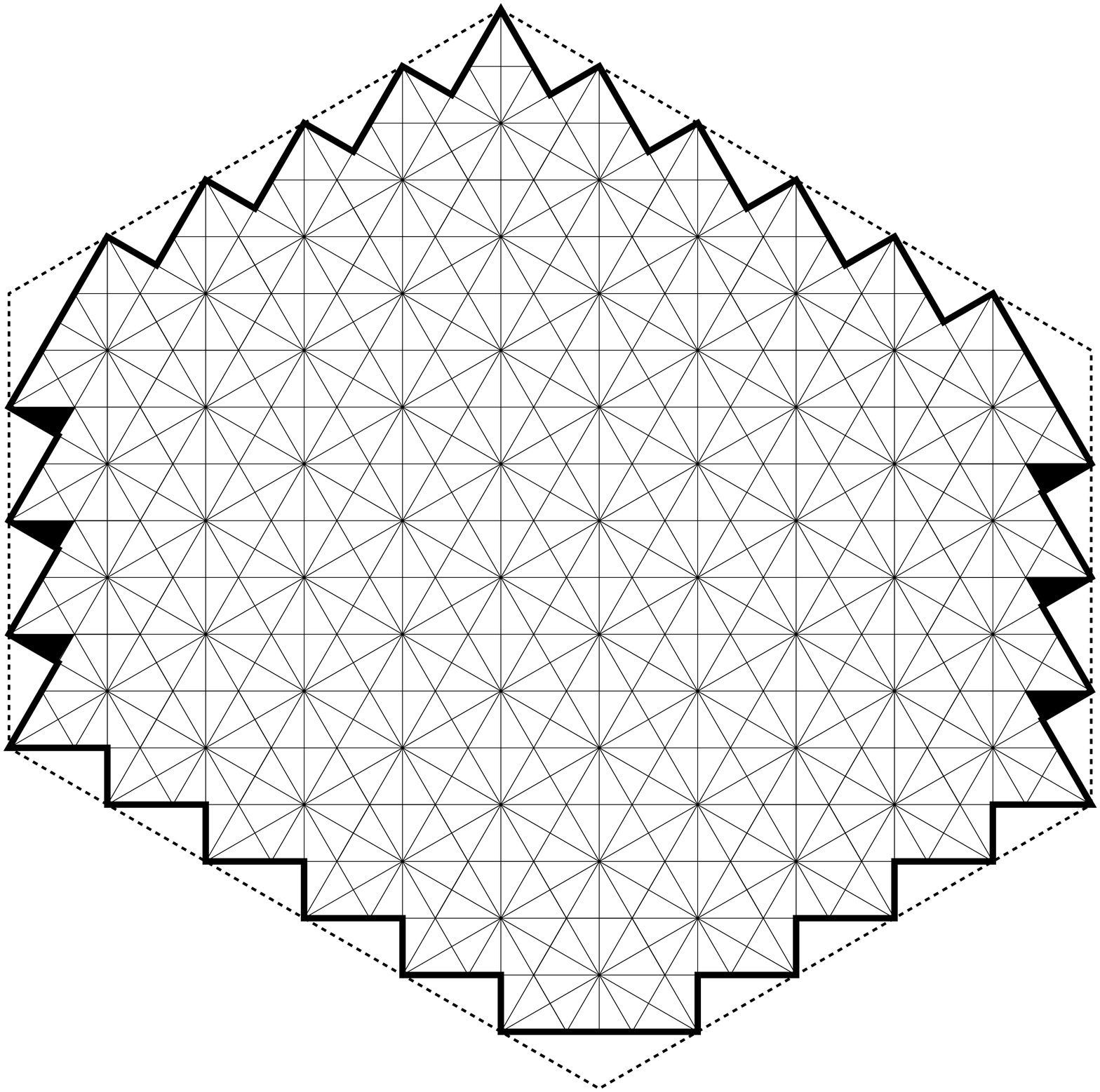}
\caption{The hexagonal dungeon with defects $HH_{4,6}$. The black triangles indicate the fundamental regions removed.}
\label{HDhole}
\end{figure}

\section{An open question on hexagonal dungeons with defects.}

We conclude this paper by considering a hexagonal dungeon where some fundamental regions have been removed as follows. Similar to the original hexagonal dungeon, we consider a similar region that is restricted by a jagged boundary running along the hexagonal contour of side-lengths $a,a+1,b,a,a+1,b$ (in cyclic order, starting from the west side). Next, we remove $a-1$ triangles from the region from each of the west and  east sides, and denote by $HH_{a,b}$ the resulting region. Figure \ref{HDhole} shows the region $HH_{4,6}$; the black triangles indicate the fundamental regions removed.

We consider the  tiling generating function of $HH_{a,b}$ as
\begin{equation}
K(x,y,z):=\sum_{T\in\mathcal{M}(HH_{a,b})}x^{m}y^{n}z^{l},
\end{equation}
 where $m$, $n$, $l$ are respectively the numbers of obtuse triangle tiles, equilateral tiles, and kite tiles in the tiling $T$ as usual. It seems that $K(x,y,z)$ is given by a simple product similar to $F(x,y,z)$ in Theorem \ref{weighthd}.

%
%
%



\begin{conj}
Assume that $a$ and $b$ are two positive integers, so that $b\geq a+1$. Then the generating function $K(x,y,z)$ always has form
\[x^{X}y^{Y}z^{Z}P(x,y,z)^{L},\]
where $X,Y,Z,L$ depend only on $a$ and $b$.
\end{conj}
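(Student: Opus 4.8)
The plan is to imitate the proof of Theorem \ref{weighthd} as closely as possible, using a graph-splitting decomposition followed by an appeal to (a suitably modified version of) Theorem \ref{newmain}. First I would set up the dual graph $G$ of $HH_{a,b}$ with the weight $x,y,z$ on the three tile types, divide each edge weight by $z$ to reduce to the case $z=1$ (picking up a global power of $2$ from the total tile count, exactly as in equation (\ref{weighteq2})), and then apply two zigzag cuts to $G$ analogous to those in Figure \ref{newsplit}. Because the contour of $HH_{a,b}$ has the ``thin'' side-length profile $a,a+1,b,a,a+1,b$ rather than $a,2a,b,a,2a,b$, and because $a-1$ triangles have been deleted from each of the east and west ends, the two outer pieces $G_1,G_3$ produced by the cuts should be dual graphs of regions of the form $D_{*,*,*}$ (or $E_{*,*,*}$) with the deletions accounted for — the key point being that the deleted black triangles sit precisely at the pointed corners so that the remaining region is again one of the $D$/$E$ family, or has the deleted cells forced by adjacent tiles. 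The middle piece $G_2$ should again be a forced region contributing a single monomial $x^{?}y^{?}$.

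The second step is to identify the generating functions of $G_1$ and $G_3$ using Theorem \ref{newmain}. If the outer pieces are $D_{\alpha,\beta,\gamma}$ for parameters $\alpha,\beta,\gamma$ that are affine-linear in $a,b$, then $\Phi(\alpha,\beta,\gamma)$ is a monomial in $x,y$ times $P(x,y,1)^{g}Q(x,y,1)^{q}$ with $g=g(\alpha,\beta,\gamma)$ and $q=q(\alpha,\beta,\gamma)$. The conjecture asserts that only $P$ — and not $Q$ — survives in $K$, which forces the prediction $q(\alpha,\beta,\gamma)=0$, i.e. $\alpha-\beta+\gamma\in\{0\}$ (or at least $|\alpha-\beta+\gamma|\le 1$, so that $\lfloor(\alpha-\beta+\gamma)^2/4\rfloor=0$). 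So a crucial sanity check at this stage is to verify that the side-length profile $a,a+1,b,a,a+1,b$ together with the $a-1$ deletions produces outer regions with $\alpha-\beta+\gamma$ equal to $0$ or $\pm1$; if so, $K(x,y,1)$ automatically has the form $x^{X}y^{Y}P(x,y,1)^{L}$ with $L=g(\alpha,\beta,\gamma)\cdot 2$ (two copies) plus any contribution $h(\alpha,\beta,\gamma)$, and one must further check that the prefactor $h(\alpha,\beta,\gamma)$ does not introduce a genuine polynomial factor other than a power of $P$ — which would require $3\beta+\alpha-\gamma\not\equiv 5,1,4\pmod 6$, i.e. $h=y$ or $h=1$. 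Unlike Theorem \ref{weighthd}, the conjecture only claims $X,Y,Z,L$ depend on $a,b$ and does not pin down their values, so the bookkeeping is lighter: one just needs the structural shape, not a closed formula for the exponents.

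There are two places where I expect real obstacles. The first, and main, one is geometric: it is not obvious that the zigzag cuts actually split $HH_{a,b}$ into pieces lying in the $D$/$E$ family, because the deletions of $a-1$ triangles at each end may not interact cleanly with the jagged boundary — the deleted cells could fail to be forced, or the remaining region could be a genuinely new family not covered by Theorem \ref{newmain}. Resolving this may require either a different cut, or proving a small auxiliary lemma that the ``dungeon with a triangular bite at the pointed end'' still has its tilings counted by the same generating function (possibly via a forcing argument or a further application of Kuo condensation / the Graph-Splitting Lemma \ref{GS}). The second obstacle is the parity/congruence case analysis: the prefactor $h(a,b,c)$ in Theorem \ref{newmain} takes five different forms mod $6$, and one must check that in the relevant range only the benign cases ($h\in\{1,y,x^2+y^2\}$) occur, or else absorb the extra factor — here the appearance of $x^2+y^2$ is troublesome since $x^2+y^2$ is not a power of $P$; if that case arises the conjecture as stated would need $P$ replaced by something else, so a careful residue computation of $3\beta+\alpha-\gamma\bmod 6$ in terms of $a,b$ is essential. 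I would finish, as in the paper, by noting that the base cases (small $a$) are checkable with \texttt{vaxmacs} and \texttt{Maple}, and that once the structural reduction is in place the induction is inherited from Theorem \ref{newmain}.
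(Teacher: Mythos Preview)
The statement you are attempting to prove is labeled a \emph{Conjecture} in the paper, and the paper provides no proof of it. Immediately after stating it, the authors write: ``Since several triangles have been removed along the boundary of the region, our method in the paper seems does not work for $HH_{a,b}$.'' So there is no ``paper's own proof'' to compare against; this is presented as an open problem.

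Your proposal is precisely the method the authors say fails: splitting via zigzag cuts and identifying the outer pieces as $D$/$E$-regions to which Theorem~\ref{newmain} applies. You correctly identify the obstacle yourself --- the $a-1$ deleted triangles along the east and west sides prevent the outer pieces from lying in the $D$/$E$ family, and there is no forcing argument that absorbs them. The deletions are interior to the jagged boundary rather than at a pointed corner, so they genuinely change the combinatorics; the resulting regions are not covered by Theorem~\ref{newmain}, nor by any small modification of it proved in the paper. Your plan to ``prove a small auxiliary lemma'' that such bitten regions still obey the same generating function is not a small step: it would amount to a new three-parameter family with its own Kuo-condensation recurrences and its own base-case verification, and there is no reason a priori that the $Q$-factor vanishes or that the prefactor $h$ avoids the bad residue classes. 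In short, your proposal recapitulates exactly the approach the authors already tried and abandoned; a proof of the conjecture, if one exists, will require a genuinely new idea.
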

Since several triangles have been removed along the boundary of the region, our method in the paper seems does not work for $HH_{a,b}$. 

Finally, we notice that ones can create new versions of the region $HH_{a,b}$ on the lattices $G_2^{(1)}$, $G_2^{(2)}$ and $G_2^{(3)}$ as we did for the hexagon dungeons in the previous section.

\end{document}